\numberwithin{equation}{section}
\theoremstyle{plain}
        \newtheorem{theorem}[equation]{Theorem}
        \newtheorem{proposition}[equation]{Proposition}
	\newtheorem{definition}[equation]{Definition}
\theoremstyle{definition}
        \newtheorem{remark}[equation]{Remark}
        \newtheorem{example}[equation]{Example}
        \newtheorem{sinnadastandard}[equation]{}
        \newtheorem{construction}[equation]{Construction}
\newcommand{\mr}[1]{\buildrel {#1} \over \longrightarrow}
\newcommand{\ml}[1]{\buildrel {#1} \over \longleftarrow}
\newcommand{\cc}{\mathcal}
\newcommand{\nn}{\mathbf}
\newcommand{\pa}{\cdot}
\newcommand{\cqd}{\hfill$\Box$}
\begin{document}


\title{Spans, simplicial families and the fundamental progroupoid}

\author{Eduardo J. Dubuc}


\begin{abstract}
%
%
In this paper we consider simplicial families, that is, simplicial
objects indexed by a 
simplicial set.  We develop a method to construct family hypercover refinements of a cover family based on the notion of \emph{n-spans} that we introduce here. 
%
In \cite{D3} we had
introduced the notion of \emph{covering projection} in a topos. They are locally constant objects satisfying an additional condition which is valid in all locally constant objects when the topos is locally connected, and developed the theory of the fundamental groupoid of a general topos. 
Here we show that covering projections can be obtained as 
objects constructed from a descent datum of a simplicial set
on a family of sets.
We construct a groupoid $\nn{G}_\cc{H}$ such that  the 
category  $\cc{G}_\cc{H}$
of covering projections trivialized by 
$\cc{H}$ is its classifying topos.
This determines a protopos $\{\cc{G}_\cc{H}\}_\cc{H}$ and a progroupoid
$\{\nn{G}_\cc{H}\}_\cc{H}$, 
suitable indexed by a filtered poset of 
hypercovers. Then we show that this progroupoid classifies torsors.   
This construction is novel also in the case of locally connected topoi, showing that locally constant object in a locally connected topos are
constructed by descent from a descent datum on a family of sets. The salient feature that
distinguishes locally connected topoi is that the progroupoid is \emph{strict}, that is, the transition morphisms are surjective on triangles, or, equivalently, the transition inverse image functors in the underlying indcategory are full and faithful. 
\end{abstract}

\maketitle

\tableofcontents
 
{\sc introduction.}
Given a locally connected topos $\cc{E} \mr{\gamma} \cc{S}et$, the
category $\cc{P}_\cc{U}$ 
of locally constant objects trivialized by a fix cover $\cc{U}$ is the
classifying topos of a groupoid $\nn{G}_\cc{U}$, $\cc{P}_\cc{U} =
 \beta\nn{G}_\cc{U}$. This determines a strict progroupoid
 $\pi_1(\cc{E})$ suitable
 indexed by a filtered poset of covers. This progroupoid is the
 fundamental progroupoid of $\cc{E}$ in the sense that for any
 (discrete) group $K$ it classifies $K$-torsors (\cite[expose IV]{G2}, \cite{M2}, \cite{B},  and  for a 
resume of the theory \cite[Appendix]{D2}).

For non-locally connected topos the category $\cc{P}_\cc{U}$ is not
the classifying topos of a groupoid, it is not even an atomic topos. In \cite{D3} we 
introduced the notion of \emph{covering projection} (locally
constant objects with an additional property) and show how they can be used in place of locally constant objects to develop the theory for an arbitrary topos. 
%
%
However this is done using localic groupoids and the sophisticated results of \cite{JT}.
%
%

In this paper we simplify the theory by considering  \mbox{\emph{hypercovers},} in fact, family hypercover refinements of a cover family. When the topos is locally connected simplicial objects have a canonical indexing simplicial set 
\mbox{$S_\bullet = \gamma !(H_\bullet)$, $H_\bullet \mr{} \gamma^{*} S_\bullet$.} For non locally connected topoi the indexing has to be given explicitly as part of the datum, which leads to the notion of simplicial families. We show that the category $\cc{G}_\cc{H}$ of covering projections trivialized by a 
family hypercover is a descent topos on the indexing simplicial set, and as such the classifying topos of a groupoid whose existence does not depend of the results in \cite{JT}. An essential part of this paper is the construction of suitable family hypercover refinements of a cover family. We discover that the $n$-simplexes of a simplicial family furnish a notion of $n$-spans which is intimately related to the coskeleton functor, and which we use to construct these hypercover refinements.

\vspace{1ex}

We describe now in detail the contents of the paper.  
In Section 1  we define and examine the notion of \emph{n-spans}, 1-spans are the usual spans. In Section 2  we review certain aspects of simplicial sets. In Section 3 we develop the concept of \emph{simplicial families}, that is, simplicial
objects in $\cc{E}$ indexed by a simplicial set, $H_\bullet \to \gamma^*(S_\bullet)$, and establish a correspondence between simplicial families and collections of n-spans associated to the n-simplices. In Section 4 we develop the concept of \emph{family hypercover} refinements of a cover family. The principal result in this section is the construction of hypercover refinements of a cover family determined by a sets of objects and a sets of 1-spans. 
In Section 5 we construct and study certain groupoids  associated to a simplicial family, and in Section 6 we establish results relating descent data on a simplicial family with left actions of the associated groupoid. In Section 7 we recall the notion of \emph{covering projection} introduced in \cite{D3} and establish its basic properties. Then we prove the principal results of the paper, namely (1): For any covering projection $X$ constant on a cover family $\cc{U} = U \mr{} \gamma^*S$ there is family hypercover refinement 
$\cc{H} = H_\bullet \to \gamma^*(S_\bullet)$ such that $X$ is constant on $\cc{H}$, and (2): The category of covering projections constant on $\cc{H}$ is the topos of left action of a groupoid associated to the family hypercover.  Finally, in Section 8 we apply all this to the construction of the fundamental progroupoid of a general topos.

I thank Matias de Hoyo for several fruitful
conversations on the coskeleton construction. 

\vspace{1ex} 

{\sc context.} 
Throughout this paper $\cc{S} = Sets$ denotes the topos of sets, and \emph{topos} means Grothendieck topos $\cc{E} \mr{\gamma} \cc{S}$. We
argue in a way that should be valid if $\cc{S}$ is
an arbitrary topos, but since we do not use \emph{change of base}, we let the interested reader verify this. 

\section{Spans}

A \emph{1-span} is a diagram of the form 
$$
\xymatrix@C=3ex@R=3ex
         {
         {} & \bullet \ar[dl] \ar[dr]
         \\
         \bullet & {} & \bullet
         }
$$

\vspace{1ex}

A \emph{2-span} is a commutative diagram of the form
$$
\xymatrix@C=4ex@R=3ex
         {
          {} & {} & \bullet
          \ar[d]\ar@/^1.1pc/[rdd] \ar@/_1.1pc/[ldd] 
         \\
          {} & {} & \bullet \ar@/^1.5pc/[rrdd] \ar@/_1.5pc/[lldd]
         \\
            {} & \bullet \ar[ld] \ar[rd] 
          & {} & \bullet \ar[ld] \ar[rd]
         \\
          \bullet & {} & \bullet  & {} & \bullet \, 
         }
$$
The \emph{dual} span is the span resulting from the symmetry respect
the vertical axle.  
For example, the dual span of the span
$
\xymatrix@C=3ex@R=3ex
         {
         {} & X \ar[dl]_u \ar[dr]^{v}
         \\
         A & {} & B
         }
$
\hspace{1ex} is \hspace{1ex}
$
\xymatrix@C=3ex@R=3ex
         {
         {} & X \ar[dl]_v \ar[dr]^{u}
         \\
         B & {} & A
         }
$
In the same way we define the dual of a 2-span to be 2-span resulting
from the symmetry respect the vertical axle. We stress the fact that
spans are \emph{ordered from left to right} structures. 

\vspace{1ex}

A \emph{n-span} is the commutative diagram resulting from the
following procedure: At the top vertex stands a generic n-simplex (see section
\ref{ssets}), then draw an arrow to each of its $n+1$ faces
(considered ordered by the index). Then, repeat this procedure. At the
bottom level stands the $n+1$ vertices of the generic n-simplex.

\section{Simplicial sets} \label{ssets}

We fix some notation about simplicial sets, denoted by $S_\bullet$. A
simplicial set has faces $\partial_i$ and degeneracies $\sigma_i$ as follows:
$$
\xymatrix@C12ex
         {
           S_n \ar@<1ex>[r]^{\partial_i} 
         & S_{n-1} \ar@<1ex>[l]_{\sigma_i} 
         },
\;\;\; n = 0, \, 1,\, \dots \, \infty, \;\;\; \{\partial_i\}_{0 \leq i \leq n}, 
        \;\;\; \{\sigma_i\}_{0 \leq i \leq n-1}
$$
subject to the usual equations.

We write $I = S_0$, the set of vertices or $0$-simplexes. Given a 1-simplex $\ell \in S_1$ and  $i, j \in I$,  we write $i
\mr{\ell} j$ to mean $i = \partial_1(\ell)$, $j = \partial_0(\ell)$. 

Given a 2-simplex $w \in S_2$, we write
$$
\xymatrix@R=1ex
         {
          {} & j \ar[rdd]^r
         \\
          {} & w
         \\
          i \ar[ruu]^\ell \ar[rr]^t & {} & k
         }
$$
to mean that $\partial_2(w) = \ell$, $\partial_1(w) = t$, $\partial_0(w)
= r$. We define $i = \varrho_2(w)$, $j  = \varrho_1(w)$, $k = \varrho_0(w)$ to
be the three pairs of equal composites of faces.
The simplicial equations show that this 
fits correctly. We say that the pair $i
\mr{\ell} j \mr{r} k\:$ \emph{compose}.
Notice that we have:
$$
\xymatrix@R=1ex
         {
          {} & j \ar[rdd]^{\sigma_0(j)}
         \\
          {} & \sigma_1(\ell)
         \\
          i \ar[ruu]^\ell \ar[rr]^\ell & {} & j
         }
\hspace{10ex} 
\xymatrix@R=1ex
         {
          {} & i \ar[rdd]^\ell
         \\
          {} & \sigma_0(\ell)
         \\
          i \ar[ruu]^{\sigma_0(i)} \ar[rr]^\ell & {} & j
         } 
$$
Recall that a category can be seen as a simplicial set such that given
any pair  $i \mr{\ell} j \mr{r} k$ there is a unique $w \in S_2$ 
such that $\partial_2(w) = \ell$, $\partial_0(w) = r$.
%
%

\vspace{1ex}

We recall now the construction of a category and a groupoid associated to a
simplicial set, which involve only the first three terms. 
$$
 \xymatrix@R=10ex@C=12ex
   {
     {S}_2 \ar@<5.4ex>[r]^{\partial_0}  
           \ar[r]^{\partial_1}
           \ar@<-5.4ex>[r]^{\partial_2}  
   &  {S}_1  \ar@<2.8ex>[r]^{\partial_0} 
             \ar@<-2.8ex>[l]_{\sigma_0}
             \ar@<2.8ex>[l]_{\sigma_1} 
             \ar@<-2.8ex>[r]^{\partial_1}
   & {S}_0 \ar[l]_{\sigma_0}
  } 
$$

\begin{proposition} [Fundamental category and groupoid of a simplicial
    set, \cite{GZ}]\label{fundamentalcat}  ${}$

Objects: The set of objects is the set $I = S_0$.
%
%

Premorphisms: Basic premorphisms $i \mr{\ell} j$, $i,\, j \in
S_0$,  are 1-simplexes
$\ell \in S_1$, $\partial_0(\ell) = j, \; \partial_1(\ell) = i$. A general
premorphism $i \mr{\phi} j$ is a sequence
$\phi = (\ell_n  \: \ldots \:  \ell_{2} \: \ell_1)$, $\ell_k \in S_1$,
$\partial_1(\ell_1) = i$, 
$\partial_1(l_{k+1}) = \partial_0(\ell_{k})$, $n \geq 1$, \mbox{$1 \leq
  \, k \, \leq n-1$}, 
$\partial_0(\ell_{n}) = j$. When $n = 1$ we write $(\ell_1) = \ell$. 
Premorphisms compose by concatenation.

\vspace{1ex}

Morphisms: The set of morphisms is the quotient of the set of
premorphisms by the equivalent relation generated by the following
\emph{basic} pairs:

\vspace{1ex}

%
%
 (1a) Given $i \mr{\ell} j \mr{r} k$, then $i \mr{(r\, \ell)} k \; \sim
\; i 
  \mr{t} k$ if there is $w \in S_2$ 
such that $\partial_2(w) = \ell$, $\partial_1(w) = t$, $\partial_0(w)
= r$.  That is, for each $w \in S_2$ we establish
$\partial_1(w) \sim (\partial_0(w) \, \partial_2(w))$.

The arrow $i \mr{\sigma_0(i)} i$ becomes the identity
morphism of $i$, $id_i = \sigma_0(i)$.

\vspace{1ex}

(1b) The groupoid is obtained by formally inverting all the arrows of the category.
 \cqd
\end{proposition}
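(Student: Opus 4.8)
The plan is to exhibit the construction as a quotient of a free structure, so that the only genuinely non-formal step is a single computation with degeneracies. First I would view the premorphisms, under concatenation, as the morphisms of the free \emph{semicategory} (category possibly without identities) on the graph with vertex set $S_0$ and edge set $S_1$: concatenation is associative by its very definition, and this structure lacks units only because the empty sequence has been deliberately excluded from the premorphisms. I would then take $\sim$ to be the \emph{congruence} on this semicategory generated by the basic pairs (1a). Here is the one point that must not be glossed over: (1a) is to be read as permitting the replacement of any consecutive segment $(\cdots\, r\,\ell\,\cdots)$ with $r\,\ell=\partial_0(w)\,\partial_2(w)$ by the segment $(\cdots\, t\,\cdots)$ with $t=\partial_1(w)$, \emph{wherever it occurs} inside a premorphism --- equivalently, one takes the congruence and not merely the bare equivalence relation generated by the displayed pairs --- for otherwise concatenation would fail to descend to the quotient. (That $\partial_0(w)\,\partial_2(w)$ is itself a legitimate premorphism is the simplicial-identity check already noted in the text.) Granting this, the quotient $P/\sim$ is again a semicategory with composition induced by concatenation, and associativity is automatic.

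It then remains only to show that $\sigma_0(i)$ furnishes a two-sided unit at $i$, so that $P/\sim$ is in fact a category and the prescription $id_i=\sigma_0(i)$ is well posed. This is exactly the content of the two degenerate triangles displayed just before the statement. For a $1$-simplex $i\mr{\ell}j$, the simplicial identities give $\partial_2\sigma_1(\ell)=\ell$, $\partial_1\sigma_1(\ell)=\ell$ and $\partial_0\sigma_1(\ell)=\sigma_0(j)$, which is precisely the left-hand triangle, so that $(\sigma_0(j)\,\ell)\sim\ell$; dually $\partial_2\sigma_0(\ell)=\sigma_0(i)$ and $\partial_1\sigma_0(\ell)=\partial_0\sigma_0(\ell)=\ell$, which is the right-hand triangle, so that $(\ell\,\sigma_0(i))\sim\ell$. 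Applying the congruence at the left, respectively the right, end of an arbitrary premorphism $\phi=(\ell_n\,\cdots\,\ell_1)\colon i\to j$ then gives $[\sigma_0(j)]\,[\phi]=[\phi]=[\phi]\,[\sigma_0(i)]$; since units in a category are unique, this simultaneously establishes that $P/\sim$ is a category and that the identity of $i$ must be the class of $\sigma_0(i)$.

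For (1b) I would take the groupoid to be the localization of the category just obtained at the class of \emph{all} of its arrows (its free groupoid completion); this exists by the standard free-localization construction and is, by its universal property, the initial groupoid receiving a functor from that category, which is what ``formally inverting all the arrows'' means. Concretely it admits an entirely parallel description: the premorphisms become finite zig-zags of $1$-simplices, each tagged forwards or backwards, composed by concatenation and taken modulo (1a) together with the cancellation pairs $\ell\,\ell^{-1}\sim id$ and $\ell^{-1}\,\ell\sim id$; reversing a zig-zag and flipping all its tags then produces a two-sided inverse of its class, so the result is indeed a groupoid.

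The only genuinely delicate point is the one flagged in the first paragraph --- that $\sim$ must be the generated \emph{congruence}, not the bare generated equivalence relation --- after which the whole argument is formal: a quotient of a free semicategory by a congruence, together with the two degeneracy computations $\partial_\bullet\sigma_1(\ell)$ and $\partial_\bullet\sigma_0(\ell)$. No confluence or normal-form analysis is needed for the statement as phrased. Such an analysis would become relevant only for the finer fact recalled immediately before the statement, namely that this construction recovers a category from its own nerve; there one has to check that the canonical map from the edge set $S_1$ into the associated category is a bijection on hom-sets, and it is exactly the \emph{uniqueness} of the $2$-simplex filler $w$ in that situation that makes the verification go through. \cqd
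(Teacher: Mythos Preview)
Your verification is correct. Note, however, that the paper itself offers no proof of this proposition: it is stated as a recall of the Gabriel--Zisman construction and simply closed with the \cqd symbol. The only hint the paper gives toward why $\sigma_0(i)$ is the identity is the pair of degenerate triangles for $\sigma_1(\ell)$ and $\sigma_0(\ell)$ displayed immediately \emph{before} the proposition --- precisely the two computations you carry out. So you are not taking a different route from the paper; you are supplying the details that the paper elects to omit, using exactly the ingredients it has put on the table.

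Your emphasis that $\sim$ must be the congruence (not merely the equivalence relation) generated by the basic pairs is a genuine clarification: the paper's phrase ``the equivalent relation generated by'' is imprecise on this point, and your reading is the only one under which the quotient inherits a composition. The remainder --- free semicategory, quotient by a congruence, localization at all arrows --- is the standard unpacking of the construction and matches what is implicit in the citation.
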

%


%
\begin{definition}
A contravariant simplicial morphism $S_\bullet \mr{h_\bullet}
T_\bullet$ between two simplicial sets is a family of maps $S_n
\mr{h_n} T_n$ such that  
$$\partial_i(h_n(w)) = h_{n-1}\partial_{n-i}(w),\;\;\; 
\sigma_i(h_{n-1}(w)) = h_n(\sigma_{n-1-i}(w)).
$$   
\end{definition}
\begin{definition} \label{duality}
A \emph{strict duality} in a simplicial set $S_\bullet$ is a
contravariant simplicial 
isomorphism $S_\bullet \mr{\tau_\bullet} S_\bullet$ with $\tau_0 =
id$. We denote $\tau$
  in both directions 
$\tau \circ \tau^{-1} = id$.
%
%
 A simplicial set
  with a strict duality is said to be \emph{self-dual}.
\end{definition}

For any vertex $i$, $\tau_0(i) =
i$. For any $n$-simplex $w$, $n > 0$, we will
denote \mbox{$\tau_n(w) = w^{op}$,} omitting the $\,n\,$.
%
%
$$w \in S_n:\;\;\partial_i(w^{op}) = \partial_{n-i}(w)^{op}, \;\;\; 
\sigma_i(w^{op}) = (\sigma_{n-1-i}(w)^{op}.
$$
 

\begin{remark}
Clearly, the notion of strict duality applies to a simplicial object
in any category.
\end{remark}
The following is clear:
\begin{proposition} \label{selfdualisgroupoid}
Let $S_\bullet$ be a self-dual simplicial set such that:
$$
\xymatrix@R=4.3ex
         { 
          {} \\ \forall \, i \mr{\ell} j \, \in S_1 \; \exists \, w \in S_2,
         } 
 \hspace{2ex} 
\xymatrix@R=1.5ex
         {
          {} & j \ar[rdd]^{\ell^{op}}
         \\
          {} & w
         \\
          i \ar[ruu]^\ell \ar[rr]^{\partial_1(w)} & {} & i \,,
         }
\hspace{2ex}
\xymatrix@R=5.3ex
         {
          {} \\  \partial_1(w) = \sigma_0(i).
         }
$$
Then the fundamental category is already  groupoid.
\cqd 
\end{proposition}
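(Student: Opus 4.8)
The plan is to show that the equivalence relation generated by the basic pairs (1a) already identifies, for every premorphism $i \mr{\phi} j$ with $i \neq j$ at the endpoints... actually, the cleaner route is to show directly that every basic generator $\ell \in S_1$ has a two-sided inverse in the fundamental category, so that the category, being generated by such invertible arrows, is automatically a groupoid. So first I would take an arbitrary $i \mr{\ell} j \in S_1$ and apply the hypothesis to produce a 2-simplex $w \in S_2$ of the indicated shape, with top edge $\ell$, right edge $\ell^{op}$, bottom edge $\partial_1(w) = \sigma_0(i) = id_i$. By the defining relation (1a) applied to $w$, we get $(\ell^{op}\,\ell) \sim \partial_1(w) = \sigma_0(i) = id_i$ in the fundamental category; that is, $\ell^{op} \circ \ell = id_i$.

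Next I would produce the relation on the other side, namely $\ell \circ \ell^{op} = id_j$. Here is where the self-duality is used in an essential way: apply the strict duality $\tau_\bullet$ to the 2-simplex $w$. Since $\tau$ is a contravariant simplicial isomorphism with $\tau_0 = id$, the simplex $w^{op} = \tau_2(w)$ is again a 2-simplex; using the identities $\partial_i(w^{op}) = \partial_{n-i}(w)^{op}$ (here $n=2$) we read off that $w^{op}$ has top edge $\partial_2(w^{op}) = \partial_0(w)^{op} = (\ell^{op})^{op} = \ell$ — wait, more carefully: $\partial_2(w^{op}) = \partial_0(w)^{op}$, and $\partial_0(w) = \ell^{op}$, so $\partial_2(w^{op}) = (\ell^{op})^{op} = \ell$ since $\tau \circ \tau^{-1} = id$. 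Similarly $\partial_0(w^{op}) = \partial_2(w)^{op} = \ell^{op}$ and $\partial_1(w^{op}) = \partial_1(w)^{op} = \sigma_0(i)^{op} = \sigma_0(i) = \sigma_0(j)$? Here I need $\varrho_2(w^{op}) = j$, which follows from tracking the vertex labels through $\tau_0 = id$ together with the contravariance (the op-dual reverses the left-to-right order of a span, so the source-vertex $i$ of $w$ becomes the target-vertex of $w^{op}$, and $\sigma_0(i)^{op} = \sigma_0(i)$ has endpoints $i = \partial_1(\sigma_0(i))$ and $i = \partial_0(\sigma_0(i))$, but after relabeling it is $\sigma_0$ of the appropriate vertex). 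Applying relation (1a) to $w^{op}$ then yields $(\ell^{op}\,\ell)$ read in the opposite order, i.e. $\ell \circ \ell^{op} \sim \partial_1(w^{op}) = id$, giving the second identity.

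Having established that $\ell$ is invertible with inverse $\ell^{op}$ for every $\ell \in S_1$, I would conclude: the fundamental category has all morphisms built as composites (classes of concatenations) of the basic arrows $\ell$, and a composite of invertible morphisms is invertible, with inverse the reversed composite of the $\ell^{op}$'s. Hence every morphism of the fundamental category is already invertible, so it coincides with its groupoid completion — the formal inversion in (1b) adds nothing. This is precisely the assertion.

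The main obstacle is bookkeeping with the contravariant simplicial identities in Definition \ref{duality}: one must be careful that $w^{op}$ genuinely has the shape needed to re-apply relation (1a) — in particular that its bottom edge $\partial_1(w^{op})$ is a degeneracy $\sigma_0$ of the correct vertex and that its top and right edges are $\ell$ and $\ell^{op}$ in the right positions. The vertex-tracking uses $\tau_0 = id$ together with the fact (stressed in Section 1) that spans are ordered left-to-right and the op-duality is the reflection across the vertical axis, so the source and target vertices get swapped; once this is set up correctly the two applications of (1a) are immediate and everything else is formal. I do not expect any genuinely hard step — as the paper says, "the following is clear" — but writing out the index arithmetic for $\partial_i(w^{op})$ cleanly is the one place where care is required.
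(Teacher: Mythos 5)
Your overall strategy --- show that every generating arrow $\ell$ is invertible in the fundamental category with inverse $\ell^{op}$, and conclude that the formal inversion in (1b) adds nothing --- is the right one, and the first half is correct: the hypothesis hands you a $w$ with $\partial_2(w)=\ell$, $\partial_0(w)=\ell^{op}$, $\partial_1(w)=\sigma_0(i)$, and relation (1a) gives $(\ell^{op}\,\ell)\sim\sigma_0(i)=id_i$. The gap is in the second half: dualizing $w$ does not produce the triangle you need. Computing with the contravariant identities $\partial_k(w^{op})=\partial_{2-k}(w)^{op}$ you get $\partial_2(w^{op})=(\ell^{op})^{op}=\ell$, $\partial_0(w^{op})=\ell^{op}$ and $\partial_1(w^{op})=\sigma_0(i)^{op}=\sigma_0(i)$ --- that is, $w^{op}$ has exactly the same three faces as $w$ (the triangle $i\to j\to i$ over $\sigma_0(i)$ is its own reflection), so (1a) applied to $w^{op}$ merely reproduces $(\ell^{op}\,\ell)\sim\sigma_0(i)$. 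Your tentative identification $\sigma_0(i)^{op}=\sigma_0(j)$ (which you flag with a question mark) is false when $i\neq j$: since $\tau_0=id$ one has $\sigma_0(i)^{op}=\sigma_0(\tau_0(i))=\sigma_0(i)$, an endo-edge at $i$, not at $j$; likewise $\varrho_2(w^{op})=\partial_1(\partial_2(w^{op}))=\partial_1(\ell)=i$, not $j$.

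The repair is immediate, and it is what the paper itself does in the proof of the analogous Proposition \ref{groupoid}: apply the hypothesis not to $w$ but to the $1$-simplex $j \mr{\ell^{op}} i$, which lies in $S_1$ because $\tau_1$ is an endomap of $S_1$. This produces a fresh $2$-simplex $w'$ with $\partial_2(w')=\ell^{op}$, $\partial_0(w')=(\ell^{op})^{op}=\ell$ and $\partial_1(w')=\sigma_0(j)$, and (1a) applied to $w'$ yields $(\ell\,\ell^{op})\sim\sigma_0(j)=id_j$. With that substitution the remainder of your argument (every morphism is a composite of classes of basic $1$-simplexes, a composite of invertibles is invertible, hence the localization at all arrows is the category itself) goes through. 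The paper prints no proof of this proposition beyond ``The following is clear,'' but the two-line argument just described is visibly the intended one, and your write-up matches it except at the single step noted above.
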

\begin{example} [Cech nerve] \label{cechnerve}
Given a family 
$\cc{U} = (U, I, \zeta)$, $\{U_i\}_{i \in I}$, $U \mr{\zeta} {\gamma}^{*}I$,
in a
topos $\cc{E} 
\mr{\gamma} \cc{S}$,
the \emph{Cech simplicial set} \footnote{Often called the
  \emph{nerve} of $\cc{U}$}
is the 
simplicial set $N_\bullet$ whose $n$-simplexes are given by
\mbox{$N_n = \{(i_0, i_1, \ldots i_n) \;|\;  U_{i_0} \times  U_{i_1} \ldots
\times U_{i_n} \;\neq\; \emptyset\} \subset I^{n+1}$}, 
in particular
$N_0 = I$, $N_1 = \{(i, \, j) \,|\, U_i \times U_j \neq \emptyset \}$, 
$N_2 = \{(i, \, j,\, k) \,|\, U_i \times U_j \times U_k \neq \emptyset
\}$.

The reader can check that it is a self-dual
simplicial set. For $i \in N_0$,  \mbox{$w =
(i,\,j,\,k) \in N_2$,} $\sigma_0(i) = (i,\,i)$, and  $\partial_2(w) = (i,\,j)$,  
$\partial_0(w) = (j,\,k)$, $\partial_1(w) = (i,\,k)$. Given
\mbox{$\ell = (i, \,j) \in N_1$,} $\ell^{op} = (j,\, i)$. Then $w =
(i,\,j,\,i)$ establish the condition in
proposition \ref{selfdualisgroupoid}. Thus the fundamental category is
a groupoid. 
\cqd  
\end{example}

\section{Simplicial families} \label{sf}

Recall that a family in a topos $\cc{E} \mr{\gamma} \cc{S}$ is an
arrow $\zeta \colon H \to {\gamma}^{*}S$. In alternative
notation we write $\cc{H} = \{H_{i}\}_{i \in S}$. We say that the
objects $H_i$ are the \emph{components} of $H$. Families are
\mbox{$3$-tuples} \mbox{$\cc{H} = (H, S, \zeta)$,} and  
$H$ is the coproduct \( \, H = \sum_{i \in S} H_i \, \) in $\cc{E}$.

\emph{Remark that the same 
object $H$ can be indexed by a different set, having then a different
set of components.}

A \emph{morphism of families} $(Y,\, J,
\,\xi) \mr{(h, \, \alpha)} (H, \,S, \,\zeta)$, is a pair $Y \mr{h} H$,
\mbox{$J \mr{\alpha} S$,}  
making the following square commutative:
$$
\xymatrix
        {
          Y \ar[r]^{h} \ar[d]^{\xi} 
        & H \ar[d]^{\zeta}
        \\
          \gamma^{*} J \ar[r]^{\gamma^*\alpha} 
        & \gamma^{*} S
        }
$$
In alternative notation, this
corresponds to  
$h = \{Y_{i} \mr{h_i} H_{\alpha(i)}\}_{i \in J}$.

We also say that $\cc{Y}$ is a \emph{refinement} of $\cc{H}$. 

\vspace{1ex}

{\bf Assumption}. \emph{We shall assume always that the components of the
families are non empty, $H_i \neq \emptyset$ for all $i \in I$.}

\vspace{1ex}

\begin{definition}
A \emph{simplicial family}  is a $3$-tuple 
$\,\cc{H}_\bullet = (H_\bullet, S_\bullet, \zeta_\bullet)$, where
$H_\bullet$, $S_\bullet$ are simplicial objects in $\cc{E}$,
$\cc{S}$ respectively, and $\,{H}_\bullet
\mr{\zeta_\bullet} \gamma^*({S}_\bullet)\,$ is a morphism of simplicial
objects in $\cc{E}$. Remark that we assume that $(H_n)_w \neq \emptyset$.

 A \emph{morphism of simplicial families}
$(Y_\bullet, J_\bullet, \zeta_\bullet) \mr{(h_\bullet, \,
  \alpha_\bullet)} (H_\bullet, S_\bullet, \zeta_\bullet)$, is a pair
$Y_\bullet \mr{h_\bullet} H_\bullet$, 
\mbox{$J_\bullet \mr{\alpha_\bullet} S_\bullet$,} of simplicial morphisms  
making the following square commutative:
$$
\xymatrix
        {
          Y_\bullet \ar[r]^{h_\bullet} \ar[d]^{\xi_\bullet} 
        & H_\bullet \ar[d]^{\zeta_\bullet}
        \\
          \gamma^{*} J_\bullet \ar[r]^{\gamma^*\alpha_\bullet} 
        & \gamma^{*} S_\bullet
        }
$$
We also say that $\cc{Y}_\bullet$ is a \emph{refinement} of
$\cc{H}_\bullet$.  

In alternative notation,  
$h_n = \{(Y_n)_{w} \mr{(h_n)_w} (H_n)_{\alpha_n(w)}\}_{w \in J_n}$.
\end{definition}

Notice that faces and degeneracy operators are
morphisms of families: 
$$
\xymatrix@C=12ex
         {
           H_n \ar@<1.2ex>[r]^{d_i} \ar[d]^{\zeta_n} 
         & H_{n-1} \ar@<1ex>[l]_{s_i} \ar[d]^{\zeta_{n-1}}
         \\
            \gamma^*{S}_n \ar@<1.3ex>[r]^{\gamma^*\partial_i}  
         &  \gamma^*{S}_{n-1} \ar@<1.3ex>[l]_{\gamma^*\sigma_i}
         }
$$
and that in alternative notation correspond to families of maps 
$$
\{\, (H_n)_w \mr{(d_i)_w} (H_{n-1})_{\partial_i(w)}\, \}_{w \in S_n}
\hspace{5ex} 
\{\, (H_n)_{\sigma_i(w)} \ml{(s_i)_w} (H_{n-1})_{w}\, \}_{w\in S_{n-1}}
$$

\vspace{5ex}

We make now some considerations involving the first three terms.

%
$$
 \xymatrix@R=15ex@C=12ex 
   {
     {H}_2 \ar@<5.4ex>[r]^{d_0}  
           \ar[r]^{d_1}
           \ar@<-5.4ex>[r]^{d_2} 
           \ar[d]^{\zeta_{2}} 
   &  {H}_1  \ar@<2.8ex>[r]^{d_0}
             \ar@<-2.8ex>[l]_{s_0}
             \ar@<2.8ex>[l]_{s_1}   
             \ar@<-2.8ex>[r]^{d_1}
             \ar[d]^{\zeta_{1}}
   & {H}_0 \ar[l]_{s_0}  
           \ar[d]^{\zeta_0}
   \\ 
     \gamma^*{S}_2 \ar@<5.4ex>[r]^{\partial_0}  
                   \ar[r]^{\partial_1}
                   \ar@<-5.4ex>[r]^{\partial_2} 
   & \gamma^*{S}_1 \ar@<2.8ex>[r]^{\partial_0}
                   \ar@<-2.8ex>[l]_{\sigma_0}
                   \ar@<2.8ex>[l]_{\sigma_1}   
                   \ar@<-2.8ex>[r]^{\partial_1} 
   & \gamma^*{S}_0 \ar[l]_{\sigma_0}
  }
$$ 


\begin{remark} \label{span} ${}$
\begin{enumerate}
\item \label{span1}
Each  1-simplex $\ell \in S_1$, $\,i \mr{\ell} j$ determines a 1-span 
$$
\xymatrix@C=3ex@R=3ex
         {
         {} &  (H_1)_\ell \ar[dl]_{(d_1)_\ell} \ar[dr]^{(d_0)_\ell}
         \\
         (H_0)_i & {} & (H_0)_j
         }
$$
\item \label{span2}
For each vertex $i \in S_0$ we have a morphism of spans:
$$
\xymatrix@C=10ex@R=1ex
         {
          {} & (H_0)_i \ar[dl]_{id} 
                          \ar[dr]^{id}
                          \ar[dd]^{(s_0)_i}
         \\
          (H_0)_i  & {} & (H_0)_i
         \\
          {} & (H_1)_{\sigma_{0}(i)} \ar[ul]^{(d_1)_{\sigma_0(i)}}
                                  \ar[ur]_{(d_0)_{\sigma_0(i)}}
         }
$$
\item \label{span3}
$
\xymatrix@R=5ex{\\ \textrm{Each 2-simplex} \; w \in S_2}
\hspace{2ex}
\xymatrix@R=1ex
         {
          {} & j \ar[rdd]^r
         \\
          {} & w
         \\
          i \ar[ruu]^\ell \ar[rr]^t & {} & k
         }
\hspace{1ex}
\xymatrix@R=5ex{\\ \textrm{determines a 2-span}}
$
$$
\xymatrix@C=4ex@R=5ex
         {
          {} & {} & (H_2)_w  \ar[d]^{(d_1)_w} 
                             \ar@/^1.5pc/[rdd]^{(d_0)_w} 
                             \ar@/_1.5pc/[ldd]_{(d_2)_w} 
         \\
          {} & {} & (H_1)_t  \ar@/^2pc/[rrdd]^{(d_0)_t} 
                             \ar@/_2pc/[lldd]_{(d_1)_t}
         \\
            {} & (H_1)_\ell  \ar[ld]^{(d_1)_\ell} 
                             \ar[rd]_{(d_0)_\ell} 
          & {} & (H_1)_r  \ar[ld]^{(d_1)_r} 
                          \ar[rd]_{(d_0)_r}
         \\
          (H_0)_i & {} &  (H_0)_j & {} & (H_0)_k\, 
         }
$$
$$
\hspace{-6ex}
\xymatrix@C=4ex@R=4ex
         {
     {} & {} & (H_2)_w \ar[dl]_{(p_2)_w} \ar[d]^{(p_1)_w} \ar[dr]^{(p_0)_w} & {}
         \\
          \textrm{The composites determine three maps} 
          \hspace{-3ex} 
         & (H_0)_i & (H_0)_j & (H_0)_k
         }
$$
\end{enumerate}
\cqd 
\end{remark}
\begin{definition} \label{sfgcG}
 A \emph{self-dual} simplicial family is a simplicial family
 \mbox{$\,\cc{H}_\bullet 
 = (H_\bullet, S_\bullet, \zeta_\bullet)$} together with a pair of strict self-dualities $\tau$ such that $\zeta_\bullet \circ
 \tau_\bullet = \gamma^*\tau_\bullet \circ \zeta_\bullet$. That is, 
 for $w \in S_n$,
 $(H_n)_w \mr{\tau_n} (H_n)_{w^{op}}$.
$$
\xymatrix@C=8ex
         {
            H_n \ar[r]^{d_{n-i}}  \ar[d]^{\tau_n} 
         &  H_{n-1} \ar[d]^{\tau_{n-1}}
         \\
            H_n \ar[r]^{d_i} 
         &  H_{n-1}
         }
\hspace{5ex}
\xymatrix@C=12ex
         {
           (H_n)_w  \ar[r]^{(d_{n-i})_w} \ar[d]^{(\tau_n)_w} 
         & (H_{n-1})_{\partial_{n-i}(w)} \ar[d]^{(\tau_{n-1})_{\partial_{n-i}(w)}}
         \\
           (H_n)_{w^{op}}  \ar[r]^{(d_i)_{w^{op}}} 
         & (H_{n-1})_{\partial_{n-i}(w)^{op}}
         }
$$
$\partial_{n-i}(w)^{op} = \partial_i(w^{op})$.
This means that
$\tau$ establishes an isomorphism between the span of $w^{op}$ and the
dual span of the span of $w$.
\end{definition}
%
%
%
%
%
%
\begin{remark}\label{dualspan} ${}$
\begin{enumerate}
\item \label{dualspan1}
For $\ell \in S_1$, if $\,i \mr{\ell} j$, then $j \mr{\ell^{op}} i$, and 
$(H_1)_{\ell^{op}} \cong (H_1)_\ell^{op}$ in the sense that $(\tau_1)_\ell$ establishes an isomorphism between the span $\ell^{op}$ and the dual span of 
$\ell\;$:
$$
\xymatrix@C=10ex@R=1ex
         {
          {} & (H_1)_{\ell^{op}} \ar[dl]_{(d_1)_{\ell^{op}}} 
                          \ar[dr]^{(d_0)_{\ell^{op}}}
                          \ar[dd]^{(\tau_1)_\ell}_\cong
         \\
          (H_0)_j  & {} & (H_0)_i
         \\
          {} & (H_1)_{\ell} \ar[ul]^{(d_0)_\ell}
                               \ar[ur]_{(d_1)_\ell}
         }
$$
\item \label{dualspan2}

\vspace{-3ex}

$$
\xymatrix@R=4ex
         {
          \\ 
          \mathrm{For \; w \in S_2, \; if}
         }
\hspace{2ex}
\xymatrix@R=1ex
         {
          {} & j \ar[rdd]^r
         \\
          {} & w
         \\
          i \ar[ruu]^\ell \ar[rr]^t & {} & k
         }
\hspace{1ex}
\xymatrix@R=4ex
          {
           \\ 
           \mathrm{\;then\;\;\;}
          }
\xymatrix@R=1ex
         {
          {} & j \ar[rdd]^{\ell^{op}}
         \\
          {} & w^{op}
         \\
          k \ar[ruu]^{r^{op}} \ar[rr]^{t^{op}} & {} & i
         }
\xymatrix@R=4ex
         {
          \\ 
          \mathrm{\;\; and}
         }
$$
$(H_2)_{w^{op}} \cong  (H_2)_w^{op}$ in the sense that
that $(\tau_2)_w$ and $(\tau_1)_l, \, (\tau_1)_t, \, (\tau_1)_r$
establish an isomorphism between the 2-span of $w^{op}$ and the dual
2-span of $w$.  We leave to the interested reader to draw the
corresponding diagram .\cqd
\end{enumerate}
\end{remark}

\begin{example} [{Cech nerve simplicial family}] \label{cechfamily}
Consider a family \mbox{$U \mr{\zeta} {\gamma}^{*}I$}  in a topos $\,\cc{E}
\mr{\gamma} \cc{S}$, and the Cech
simplicial set  $N_\bullet$, see \mbox{example \ref{cechnerve}.}

 The \emph{canonical simplicial 
object} $U_\bullet$  is the simplicial object in $\cc{E}$ whose
 $n$-simplexes are given by 
\mbox{$U_n = U \times U  \times \cdots  U$ ($n+1$ times),} 
\mbox{\( \, U_n = \sum_{(i_0, i_1, \ldots i_n) \, \in \: N_n } 
                          \, U_{i_0} \times  U_{i_1} \ldots \times
                          U_{i_n} \, \),}
in particular 
$$
{U}_0 = \sum_{i \in \,N_0} U_i\,,\;\;\; 
{U}_1 = \sum_{(i,\,j) \in \,N_1} U_i \times U_j\,,\;\;\;
{U}_2 = \sum_{(i,\,j,\,k) \in \,N_2} U_i \times U_j \times U_k
$$
It is easy to see that $U_\bullet$ is a self-dual simplicial object
with faces given by the appropriate projections ans degeneracies by the
appropriate diagonals. As for the self-duality, 
$\tau_1$ is the usual symmetry of the cartesian product, $\tau_2$
permutes the first and third factors, and leave unchanged the middle
one, etc. The map $U \mr{\zeta} \gamma^{*}I$ determines a
self-dual simplicial family \mbox{$\, {U}_\bullet
  \mr{\zeta_\bullet}  
\gamma^*({N}_\bullet)\,$} that we call \emph{Cech simplicial family}.
\cqd  
\end{example}
\begin{proposition}\label{counit}
Given any simplicial family  $\,\cc{H}_\bullet = (H_\bullet,
S_\bullet, \zeta_\bullet)$, there is a canonical morphism of
simplicial families 
$$
\xymatrix
        {
          H_\bullet \ar[r]^{h_\bullet} \ar[d]^{\zeta_\bullet} 
        & U_\bullet \ar[d]^{\zeta_\bullet}
        \\
          \gamma^{*} S_\bullet \ar[r]^{\gamma^*\alpha_\bullet} 
        & \gamma^{*} N_\bullet
        }
$$
where $(U \mr{\zeta} \gamma^{*}I) = (H_0 \mr{\zeta_0} \gamma^{*}S_0)$.
If the family is self-dual, $h$ and $\alpha$ commute with the dualities.  
\end{proposition}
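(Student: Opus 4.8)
The plan is to construct the morphism $(h_\bullet,\alpha_\bullet)$ level by level, exploiting the universal property of the canonical simplicial object $U_\bullet$ built in Example \ref{cechfamily} from the family $H_0 \mr{\zeta_0}\gamma^*S_0$. First I set $I = S_0$ and $U = H_0$ as indicated, so that $U_\bullet$ has $n$-simplices $U_n = \sum_{(i_0,\dots,i_n)\in N_n} (H_0)_{i_0}\times\cdots\times(H_0)_{i_n}$ and $N_\bullet$ is the Cech nerve of $\{(H_0)_i\}_{i\in I}$. The index map $\alpha_n \colon S_n \to N_n$ must send an $n$-simplex $w\in S_n$ to the tuple $(\varrho_n(w),\dots,\varrho_0(w))$ of its iterated vertices — that is, $\alpha_0 = id_{S_0}$, and in degree $n$ one takes the composite face maps down to the $n+1$ vertices of $w$. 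The key point is that this tuple does lie in $N_n$: since $(H_n)_w \neq \emptyset$ by the standing assumption, and the $n+1$ vertex-projections $(p_k)_w\colon (H_n)_w \to (H_0)_{\varrho_{n-k}(w)}$ of Remark \ref{span}\eqref{span3} (more precisely their evident higher-dimensional analogues) produce a nonempty map into the product $(H_0)_{i_0}\times\cdots\times(H_0)_{i_n}$, that product is nonempty, hence the tuple is a simplex of $N_\bullet$. This is exactly the observation that makes the construction work, and I expect it to be the main technical nub: one must check that the full tuple of iterated faces of $w$ indeed indexes a nonempty finite product in $\cc{E}$, which follows because $(H_n)_w$ maps into it.

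Next I define $h_n\colon H_n \to U_n$. On the component indexed by $w\in S_n$ the map $(h_n)_w\colon (H_n)_w \to (H_0)_{\varrho_n(w)}\times\cdots\times(H_0)_{\varrho_0(w)} = (U_n)_{\alpha_n(w)}$ is the tupling $\langle (p_n)_w,\dots,(p_0)_w\rangle$ of the composite-of-faces maps. For $n=1$ this is just $\langle (d_1)_\ell,(d_0)_\ell\rangle\colon (H_1)_\ell \to (H_0)_i\times(H_0)_j$, and for $n=2$ it is the triple $\langle (p_2)_w,(p_1)_w,(p_0)_w\rangle$ displayed in Remark \ref{span}\eqref{span3}; in general one iterates. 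Assembling over all $w$ gives $h_n\colon H_n\to U_n$, and by construction the square with $\zeta_n$ and $\gamma^*\alpha_n$ commutes, since both ways around send the component $(H_n)_w$ over the index $\alpha_n(w)\in N_n\subset I^{n+1}$.

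Then I verify that $h_\bullet$ and $\alpha_\bullet$ are simplicial morphisms, i.e.\ compatible with all $\partial_i,\sigma_i$ and $d_i,s_i$. For $\alpha_\bullet$ this is a bookkeeping check: applying a face $\partial_i$ to $w$ deletes exactly one of the iterated vertices of $w$ (by the simplicial identities, $\varrho_j(\partial_i(w))$ equals $\varrho_j(w)$ or $\varrho_{j+1}(w)$ according to whether $j<i$ or $j\geq i$), which is precisely the face map of the Cech nerve $N_\bullet$ that deletes the corresponding coordinate; degeneracies are the dual diagonal insertion. For $h_\bullet$ one compares, on each component, the composite-of-faces maps before and after applying $d_i$ or $s_i$, using that the faces of $U_\bullet$ are the coordinate projections and the degeneracies the diagonals — so that deleting a vertex of $w$ on the $H$-side corresponds to dropping a projection on the $U$-side, which matches because composites of faces in $H_\bullet$ satisfy the simplicial identities. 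This is routine but must be spelled out enough to be convincing; it is where most of the length goes.

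Finally, for the self-dual case: if $\cc{H}_\bullet$ carries a strict duality $\tau_\bullet$, then on $H_0$ we have $\tau_0 = id$, so the induced duality on $U_\bullet$ is precisely the Cech duality of Example \ref{cechfamily} (symmetry $\tau_n$ reversing the order of the $n+1$ factors), and the induced duality on $N_\bullet$ is the order-reversal of tuples from Example \ref{cechnerve}. One then checks $\alpha_n(w^{op}) = \alpha_n(w)^{op}$ — immediate since reversing the iterated-vertex tuple is exactly order reversal, using $\varrho_k(w^{op}) = \varrho_{n-k}(w)$ — and that $(h_n)_{w^{op}}$ intertwines $(\tau_n)_w$ on the $H$-side with the factor-reversing symmetry on the $U$-side, which holds because $\tau$ carries the $k$-th composite-face map of $w$ to the $(n-k)$-th composite-face map of $w^{op}$ by Definition \ref{sfgcG}. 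Thus $h$ and $\alpha$ commute with the dualities, completing the proof.
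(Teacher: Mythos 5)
Your construction is exactly the paper's: $\alpha_n$ sends a simplex to its tuple of iterated vertices, $h_n$ is the tupling of the composite-of-faces maps $(p_k)_w$ of Remark \ref{span}, and the self-dual case follows from Remark \ref{dualspan}; the paper merely states this for $n\le 2$ and leaves the rest to the reader, whereas you spell out the simplicial-identity checks and the (correct and worth noting) observation that $\alpha_n(w)\in N_n$ because $(H_n)_w\neq\emptyset$ maps into the corresponding product. Apart from a small indexing slip in one line (the vertex projection should be $(p_k)_w\colon (H_n)_w\to (H_0)_{\varrho_k(w)}$, consistent with the ordering you actually use in the tupling), the proposal is correct and follows the same route.
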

\begin{proof}
For the first three terms the proof should be clear by the remark
\ref{span}: Given $\ell \in S_1$ and $w \in S_2$, $\alpha_1(\ell) =
(\partial_1(\ell),\, \partial_0(\ell))$,  
$(h_1)_\ell = ((d_1)_\ell,\, (d_0)_\ell)$,  
$\alpha_2(w) = (\varrho_2(w),\, \varrho_1(w),\, \varrho_0(w))$, 
\mbox{$(h_2)_w = ((p_2)_w,\, (p_1)_w,\, (p_0)_w)$.} The second 
assertion follows by remark \ref{dualspan}. For the higher simplexes the proof
is the same and the 
interested reader can deduce the necessary calculations.
\end{proof}

\section{Family hypercovers}

We consider now $U \mr{\zeta} \gamma^{*}I$ to be a cover, that is the
map $U \to 
1$ an epimorphism, and we will establish the condition that says that
$H_\bullet 
\mr{h_\bullet} U_\bullet$ is a hypercover in the sense of
\cite{AM}.

\begin{sinnadastandard} [\bf{The coskeleton}] \label{thecoskeleton} 
\end{sinnadastandard}
By construction of the coskeleton we have, for  $(\ell,\, t,\, r) \in
S_1 \times S_1 \times S_1$,     
$
\hspace{2ex}
\xymatrix@R=3.5ex{\\ (cosk_1 S_\bullet)_2 = \{(\ell,\, t,\, r) \; |  \;}
\xymatrix@R=1ex
         {
          {} & j \ar[rdd]^r
         \\
          {} & {}
         \\
          i \ar[ruu]^\ell \ar[rr]^t & {} & k
         }
\hspace{1ex}
\xymatrix@R=3.5ex{\\ \textrm{(no $w$ filling the triangle})\;\}}
$ 
$$
= \{(\ell,\, t,\, r) \;|\; 
\partial_0(\ell) = \partial_1(r) = j,\,
\partial_1(t) = \partial_1(\ell) = i,\,
\partial_0(t) = \partial_0(r) = k\,  \}
$$

\vspace{1ex}

Let $P_{ij}$ and $P_{\ell t r}$ be limit cones as follows:
$$
\hspace{2ex}
\xymatrix@C=2ex@R=3ex
         {
         \\
         \\
         {} & P_{ij} \ar[dl]_{\pi_1} \ar[dr]^{\pi_0} & {}
         \\
         (H_0)_i & {} & (H_0)_j\,, 
         }
\hspace{1ex}
\xymatrix@C=4ex@R=4ex
         {
          {} & {} & P_{\ell t r}  \ar[d]^{\pi_1} 
                             \ar@/^1.5pc/[rdd]^{\pi_0} 
                             \ar@/_1.5pc/[ldd]_{\pi_2} 
         \\
          {} & {} & (H_1)_t  \ar@/^2pc/[rrdd]^{(d_0)_t} 
                             \ar@/_2pc/[lldd]_{(d_1)_t}
         \\
            {} & (H_1)_\ell  \ar[ld]^{(d_1)_\ell} 
                             \ar[rd]_{(d_0)_\ell} 
          & {} & (H_1)_r  \ar[ld]^{(d_1)_r} 
                          \ar[rd]_{(d_0)_r}
         \\
          (H_0)_i & {} &  (H_0)_j & {} & (H_0)_k\, 
         }
$$
$P_{ij} = (H_0)_i \times (H_0)_j$, $N_1 = \{(i,\,j) \,| \, P_{ij} \neq
         \emptyset \,\} $. Let $T_2$ be  
$T_2 \subset (cosk_1 S_\bullet)_2$, 
\mbox{$T_2 = \{(\ell,\, t,\, r) \,| \, P_{\ell t r} \neq \emptyset
  \,\}$}. It follows $(i,\,j, \,k) \in N_2$, thus there is a function
$T_2 \mr{} N_2$. Clearly for $w \in S_2$,
$(\partial_2(w), \, \partial_1(w),\, \partial_0(w)) \in T_2$, defining
a function   
\mbox{$S_2 \mr{} T_2$.}
By construction of the coskeleton, we have:
 
\vspace{1ex}

\hspace{10ex} $(H_0)_i = U_i$, indexed by $S_0$, 

\vspace{1ex}

\hspace{10ex} $((cosk_0 H_\bullet)_1)_{ij} = P_{ij}$, indexed by $N_1$, 

\vspace{1ex}

\hspace{10ex} $((cosk_1 H_\bullet)_2)_{\ell t r} =  P_{\ell t r}$,
indexed by $T_2$.  
$$
H_0 = U_0, \hspace{5ex} (cosk_0 H_\bullet)_1 =  U_1 = \sum_{(i,\,j)
  \in \,N_1} P_{ij},
$$  
$$
(cosk_1 H_\bullet)_2 
\;\; = \;\sum_{(\ell,\,t,\,r) \in \,T_2} P_{\ell t r}
\;\; = \;\; \sum_{(i,\,j,\,k) \in \,N_2} \;\;
\sum_{(\ell,\,t,\,r) \in \,(T_2)_{ijk}} 
\;\;\; P_{\ell t r}\,.
$$
Clearly, for each $\ell \in S_1$, $i = \partial_1(\ell)$, $j =
\partial_0(\ell)$, 
there is a map $(H_1)_\ell \mr{} P_{ij}$, and for each $w \in S_2$,
$\ell = \partial_2(w)$, $t = \partial_1(w)$, $r = \partial_0(w)$, 
there is a map $(H_2)_w \mr{} P_{\ell t r}$, which are the components
of the maps $H_1 \mr{} (cosk_0 H_\bullet)_1$ and $H_2 \mr{} (cosk_1
H_\bullet)_2$. From these considerations it follows:
\begin{definition} \label{indexedhypercover}
 A indexed hypercover
refinement of a cover $(U \mr{\zeta} \gamma^{*}I)$ is a simplicial
family $H_\bullet \mr{\zeta} S_\bullet$, $S_0 = I,\; H_0 = U$ such
that the canonical morphism $H_\bullet \mr{} U_\bullet$ is a
hypercover, that is, the maps $H_{k} \mr{} (cosk_{k-1}
H_\bullet)_k$ are epimorphic (\cite{AM}). This is the case when for each
\mbox{$(i,\,j) \in N_1$} 
and $(\ell, t, r) \in T_2$, the families 
$\{(H_1)_\ell \mr{} P_{ij}\}_{\ell \in (S_1)_{ij}}$ and
$\{(H_2)_w \mr{} P_{\ell t r}\}_{w \in (S_2)_{\ell t r}}$
are epimorphic. \cqd
\end{definition}

As we have seen in remark \ref{span}, a simplicial family $H_\bullet
\mr{\zeta} S_\bullet$  
determines  a collection of spans in each dimension. The n-spans in
this collection are in one to one correspondence with the set $S_n$ of
n-simplexes 
of the index simplicial set, while the object $H_n$ of the simplicial
object is the coproduct of the vertices of all the n-spans indexed by $S_n$.
This collection of spans conform a set of data which is equivalent to
the simplicial family. Thus, we can determine a simplicial family by
specifying a suitable collections of spans.

\vspace{1ex}

Consider any family $U \mr{\zeta} {\gamma}^{*}I$. 
We will construct   
self-dual simplicial refinements of the Cech simplicial family
$$
\xymatrix
        {
          H_\bullet \ar[r]^{h_\bullet} \ar[d]^{\xi_\bullet} 
        & U_\bullet \ar[d]^{\zeta_\bullet}
        \\
          \gamma^{*} S_\bullet \ar[r]^{\gamma^*\alpha_\bullet} 
        & \gamma^{*} N_\bullet
        }
$$ 
determined by a given set of spans. This is similar to the
construction of the 
coskeleton functor.
\begin{construction} [0-Span refinements of the Cech simplicial
  family] \label{0spanref}
 \label{0hypercover}  ${}$

 Let $\cc{C}$ be a set of
\emph{non empty} objects closed under isomorphism and such that $U_i
\in \cc{C}$, all 
$i$. We will construct a self-dual simplicial refinement such that for
any $w \in S_n$, the components $(H_n)_w$ are in $\cc{C}$.  We
describe in detail the first three terms, where the 
  procedure is best understood. 

\vspace{1ex}

{\bf The simplicial set $S_\bullet\,$}:
\begin{enumerate}
\item
$S_0 = N_0$, $h_0 = id$.
\item
$S_1$ is the set of all 1-spans with vertex in $\cc{C}$ over the
objects $U_i$. We write $S_1 \mr{\alpha_1} N_1$, and for  $(i, \, j) \in
N_1$, define the fibers of $\alpha_1$ as:
$$
\xymatrix@C=3ex@R=1ex
         {
          {} & {} & V \ar[dl]_{u} \ar[dr]^{v} & {}
         \\
          (S_1)_{ij}\hspace{1ex} = \hspace{1ex} \{\,\ell \;=\;  \hspace{-4ex}
         & U_i & {} & U_j\,, 
         & \hspace{-3ex} \;  V \in \cc{C} \}
         }
$$ 
and  $\;\; \partial_0(\ell) = j$, $\partial_1(\ell) = i$, $\sigma_0(i) =
(U_i \ml{id} U_i \mr{id} U_i).$
\item
$S_2$ is the set of all 2-spans over the objects $U_i$ determined by
  the objects of $\cc{C}$. We write $S_2 \mr{\alpha_2} N_2$, and for
  $(i, \, j, \, k) \in 
N_2$, define the fibers of $\alpha_2$ as:

\mbox{
$
\hspace{-3.5ex}
\xymatrix@R=15.3ex
         {
          {} 
          \\
          (S_2)_{ijk}\hspace{1ex} = \hspace{1ex} \{w = 
         }
$
$
\xymatrix@C=4ex@R=3ex
         {
          {} & {} & W
          \ar[d]^{y} \ar@/^1.1pc/[rdd]^{z} \ar@/_1.1pc/[ldd]_{x} 
         \\
          {} & {} & Y  \ar@/^1.5pc/[rrdd]^{v_b} \ar@/_1.5pc/[lldd]_{u_b}
         \\
            {} & X \ar[ld]^{u_a} \ar[rd]_{v_a} 
          & {} & Z \ar[ld]^{u_c} \ar[rd]_{v_c}
         \\
          U_i & {} & U_j & {} & U_k\, 
         }
$
$
\hspace{-2ex}
\xymatrix@R=15.3ex
         {
          {} 
          \\
          , \;X,\; Y,\; Z,\; W \in \cc{C} \; \}
         }
$
}

\vspace{-3ex}

$$
\hspace{-3ex}
\xymatrix@C=3ex@R=3ex
         {
          {} & {} & W \ar[dl]_{f} \ar[d]^{h} \ar[dr]^{g} & {}
         \\
          \textrm{Taking the respective composites we have three maps} 
          \hspace{-3ex} 
         & U_i & U_j & U_k\,.
         }
$$
The face operators are clear: 
$$
\hspace{-2.5ex}
\xymatrix@C=1.6ex@R=1ex
         {
          {} & {} & X \ar[dl]_{u_a} \ar[dr]^{v_a}
         \\
         \partial_2(w)   \;=\;  \hspace{-3ex}
         & U_i & {} & U_j\,, 
         }
\xymatrix@C=1.6ex@R=1ex
         {
          {} & {} & Y \ar[dl]_{u_b} \ar[dr]^{v_b}
         \\
           \partial_1(w) \;=\;  \hspace{-3ex}
         & U_i & {} & U_k\,, 
         }
\xymatrix@C=1.6ex@R=1ex
         {
          {} & {} & Z \ar[dl]_{u_c} \ar[dr]^{v_c}
         \\
           \partial_0(w) \;=\;  \hspace{-3ex}
         & U_j & {} & U_k 
         }
$$
Since $W \neq \emptyset$,
$(\partial_2(w),\, \partial_1(w), \, \partial_0(w)) \in T_2$ (see
\ref{thecoskeleton}),  $S_2 \mr{\alpha_2} N_2$ factors $S_2 \mr{} T_2
\mr{} N_2$. 

Given $\ell \in S_1$, the degeneracy operators 
$\sigma_0(\ell)$ and  $\sigma_1(\ell)$ are given by:
$$
\xymatrix@C=4ex@R=3ex
         {
          {} & {} & V
          \ar[d]^{id} \ar@/^1.1pc/[rdd]^{id} \ar@/_1.1pc/[ldd]_{id} 
         \\
          {} & {} & V  \ar@/^1.5pc/[rrdd]^{v} \ar@/_1.5pc/[lldd]_{u}
         \\
            {} & V \ar[ld]^{u} \ar[rd]_{v} 
          & {} & V \ar[ld]^{v} \ar[rd]_{v}
         \\
          U_i & {} & U_j & {} & U_j\, 
         }
\;\;\;
\xymatrix@C=4ex@R=3ex
         {
          {} & {} & V
          \ar[d]^{id} \ar@/^1.1pc/[rdd]^{id} \ar@/_1.1pc/[ldd]_{id} 
         \\
          {} & {} & V  \ar@/^1.5pc/[rrdd]^{v} \ar@/_1.5pc/[lldd]_{u}
         \\
            {} & V \ar[ld]^{u} \ar[rd]_{u} 
          & {} & V \ar[ld]^{u} \ar[rd]_{v}
         \\
          U_i & {} & U_i & {} & U_j\, 
         }
$$
\end{enumerate}
{\bf The simplicial object $H_\bullet\,$}:

It is determined by talking the coproducts of the vertices of the
spans:
\begin{enumerate}
\item
$H_0 \mr{\zeta_0} {\gamma}^{*}S_0$ is defined by:
$$
(H_0)_i = U_i\, , \; H_0 = \sum_{i \in \,S_0} U_i\,
$$

\vspace{-2ex}

\item
$H_1 \mr{\xi_1} S_1$ is  defined by:
$$
(H_1)_{\ell} = V, \hspace{5ex} {H}_1 \; = \sum_{\ell\, \in \,S_1} V \;\; =
\;\sum_{(i,\,j) \in \,N_1} 
\;\;\sum_{\ell\, \in \,(S_1)_{ij}} V.
$$
Then $(s_0)_i = id_{U_i}$, 
$(d_0)_{\ell} = v$, and $(d_1)_{\ell} = u$. The map 
$V \mr{(u,\,v)} U_i \times U_j$ induce a map
$H_1 \mr{h_1} U_1$ commuting with the simplicial structure.
\item 
$H_2 \mr{\xi_2} S_2$ is  defined by:
$$
(H_2)_{w} = W\,, \hspace{5ex} {H}_2 
\; = \sum_{w\, \in \,S_2} W \;\; = \;\sum_{(\ell,\,t,\,r) \in \,T_2}
\;\;\sum_{w\, \in \,(S_2)_{\ell t r}} W
$$
$$
\hspace{-30ex} = \;\; \sum_{(i,\,j,\,k) \in \,N_2} \;\;
\sum_{(\ell,\,t,\,r) \in \,(T_2)_{ijk}} 
\;\;\; \sum_{w\, \in \,(S_2)_{\ell t r}} W\,.
$$

Define $(s_0)_\ell = id_V$, $(s_1)_\ell = id_V$, $(d_0)_w = z$,
$(d_1)_w = y$, $(d_2)_w = x$.
The map $W \mr{(f,\,g,\,h)} U_i \times U_j \times U_k$  induce a map
$H_2 \mr{h_2} U_2$ commuting with the simplicial structure.
\end{enumerate}

%
%
%

{\bf The self-duality $\tau\,$:}

 Recall that $\tau_0 = id$. 
%
%
Given $\ell \in S_1$, we define $\tau_1(\ell) = \ell^{op}$ as the
pullback on the right below: 
$$
\xymatrix@C=1.5ex@R=3ex
         {
          {} & {} & V^{op} \ar[dl]_{u^{op}} \ar[dr]^{v^{op}} & {}
         \\
           \ell^{op} \;=\;\;\;  \hspace{-3ex}
         & U_j & {} & U_i\,,
         }
\hspace{8ex}
\xymatrix@C=10ex
         {
           V^{op} \ar[r]^{\hspace{-2ex} (u^{op},\,v^{op})}  \ar[d]^{(\tau_1)_\ell}
         & U_j \times U_i  \ar[d]^{\tau_1}
         \\
           V \ar[r]^{\hspace{-3ex} (u,\,v)} 
         & U_i \times U_j
         }
$$
It follows $u^{op} = v\circ(\tau_1)_\ell$ and $v^{op} =
u\circ(\tau_1)_\ell$, so that $(\tau_1)_\ell$ establishes an
isomorphism between the span $\ell^{op}$  and the dual span of
$\ell$. 
This shows that $\tau_1$ commutes with the family structure,
\mbox{$\xi_1 \circ \tau_1 = \gamma^*(\tau_1) \circ \xi_1$} (see \ref{sfgcG}). 

\vspace{1ex}

Given $w \in S_2$, we define $\tau_2(w) = w^{op}$ as the following 2-span:
$$
\xymatrix@C=4ex@R=4ex
         {
          {} & {} & W^{op}
          \ar[d]^{y^{op}}  \ar@/^1.5pc/[rdd]^{z^{op}} 
                           \ar@/_1.5pc/[ldd]_{x^{op}} 
         \\
          {} & {} & Y^{op}  \ar@/^2pc/[rrdd]^{v_b^{op}} 
                            \ar@/_2pc/[lldd]_{u_b^{op}}
         \\
            {} & Z^{op} \ar[ld]^{u_c^{op}} \ar[rd]_{v_c^{op}} 
          & {} & X^{op} \ar[ld]^{u_a^{op}} \ar[rd]_{v_a^{op}}
         \\
          U_k & {} & U_j & {} & U_i\, 
         }
$$
Where $W^{op}$ is defined as the pullback on the right below:
$$
\xymatrix@C=4ex@R=4ex
         {
          {} & W^{op} \ar[dl]_{f^{op}} \ar[d]^{h^{op}} \ar[dr]^{g^{op}} & {}
         \\
          \hspace{-4ex}
          U_k & U_j & U_i\,, 
         }
         \hspace{10ex} 
\xymatrix@C=14ex
         {
           W^{op} \ar[r]^{\hspace{-5ex} (f^{op},\,h^{op},\,g^{op})}  
                  \ar[d]^{(\tau_2)_w}
         & U_k \times U_j \times U_i  \ar[d]^{\tau_2}
         \\
           W \ar[r]^{\hspace{-5ex} (f,\,h,\,g)} 
         & U_i \times U_j \times U_k
         }
$$ 
We have  $f^{op} = g\circ(\tau_2)_w$, $h^{op} = h\circ(\tau_2)_w$ and 
$ g^{op} = f\circ(\tau_2)_w$. It follows there are maps
$x^{op},\,y^{op},\,z^{op}$ (as indicated in the span diagram above)
which satisfy the equations: 
 
\vspace{-4ex}
 
$$
(\tau_1)_r \circ x^{op} = z \circ (\tau_2)_w, \; \;
(\tau_1)_t \circ y^{op} = y \circ (\tau_2)_w \;\; and \;\;  
(\tau_1)_\ell \circ z^{op} = x \circ (\tau_2)_w \,.
$$
This shows that $(\tau_2)_w$ and $(\tau_1)_\ell,\, (\tau_1)_t ,\,
(\tau_1)_r$ establish an isomorphism between the 2-span  
$w^{op}$  and the dual 2-span of $w$. That is, $\tau_2$ commutes with
the family structure,  
\mbox{$\xi_2 \circ \tau_2 = \gamma^*(\tau_2) \circ \xi_2$} (see
\ref{sfgcG}).
\cqd \end{construction}
   
\begin{construction}
[1-Span refinements of the Cech simplicial
  family] \label{1hypercover} \label{1spanref} ${}$

Let $\cc{C}_{sp}$ be a set of \emph{non empty} 1-spans closed under
isomorphisms, the dual span, such that $(U_i \ml{id} U_i \mr{id}
U_i) \in \cc{C}_{sp}$, all $i$, and such that \mbox{$U_i \ml{u} V \mr{u}
U_i\,$,} $\,U_j \ml{v} V \mr{v} U_j$ $\in \cc{C}_{sp}$ for all $U_i \ml{u}
V \mr{v} U_j$ $\in \cc{C}_{sp}$.  Let $\cc{C}$ be the set of vertices
of the spans in  $\cc{C}_{sp}$. 
We will construct a self-dual
simplicial refinement such that the set of  1-spans determined by the
1-simplexes is the set $\cc{C}_{sp}$ (and for 
$w \in S_n$, the component $(H_n)_w$ is in $\cc{C}$).

\vspace{1ex}

With the notation in construction \ref{0hypercover}, the $0$-term is
the same than 
in \ref{0hypercover}. The set $S_1$ is just defined to be the set
$\cc{C}_{sp}$ with the same simplicial structure, and $S_2$ is also
defined in the same way, but with the assumption that the three 1-spans
with vertices $X,\, Y,\, Z$ should be in $\cc{C}_{sp}$. 
The simplicial object $H_\bullet$ is defined exactly as in
\ref{0hypercover}, and from the fact that $\cc{C}_{sp}$ is closed
under the dual span and isomorphisms it follows that the definition of
the selfduality
$\tau$ in \ref{0hypercover} also applies here.
\cqd \end{construction}
  
%
%
%
%
%
%
%
%
%
From definition \ref{indexedhypercover} we have:
\begin{proposition} \label{hypercovering}
Given a cover $U \mr{\zeta} {\gamma}^{*}I$, if for each $(i,\,j) \in
N_1$ and $(\ell,\, t,\, r) \in T_2$ we have:
\begin{enumerate}
\item \label{0indexedhypercover}
If $\cc{C}$ as in construction \ref{0spanref} is such that  the families of all
maps \mbox{$\{W \mr{} P_{ij}\}_{W \in 
  \cc{C}}$} and $\{W \mr{} P_{\ell t r}\}_{W \in \cc{C}}$ are
epimorphic. \hspace{3ex} Or
\item \label{1indexedhypercover}
If $\cc{C}_{sp}$ and  $\cc{C}$ as in construction \ref{1spanref} are
such that  the 
families of all 
maps \mbox{$\{W \mr{(u, \, v)} P_{ij}\}_{(u,\,v) \in \cc{C}_{sp}}$} and  $\{W
\mr{} P_{\ell t r}\}_{W \in \cc{C}}$  are
epimorphic. 
\end{enumerate}
Then the span refinement $H_\bullet \mr{h_\bullet} U_\bullet$ is a hypercover.
\cqd 
\end{proposition}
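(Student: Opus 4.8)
The plan is to recognise the two displayed hypotheses as precisely the epimorphy conditions in dimensions $1$ and $2$ appearing in Definition~\ref{indexedhypercover}, read off from the explicit description of the span refinements. So I would first recall what that definition requires: since $U \mr{\zeta} \gamma^*I$ is a cover, $H_0 = U \to 1$ is already an epimorphism, and it then suffices --- by the criterion in Definition~\ref{indexedhypercover} --- to check that for every $(i,\,j) \in N_1$ and every $(\ell,\, t,\, r) \in T_2$ the canonical families $\{(H_1)_\ell \mr{} P_{ij}\}_{\ell \in (S_1)_{ij}}$ and $\{(H_2)_w \mr{} P_{\ell t r}\}_{w \in (S_2)_{\ell t r}}$ are epimorphic.

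For case~\ref{0indexedhypercover} I would simply unwind Construction~\ref{0spanref}. There, by construction, the fibre $(S_1)_{ij}$ over $(i,\,j) \in N_1$ is the set of all $1$-spans $U_i \ml{u} V \mr{v} U_j$ with $V \in \cc{C}$, with $(H_1)_\ell = V$ and the canonical arrow $(H_1)_\ell \mr{} P_{ij} = U_i \times U_j$ equal to $(u,\,v)$; hence $\{(H_1)_\ell \mr{} P_{ij}\}_{\ell \in (S_1)_{ij}}$ is nothing but the family of all arrows from objects of $\cc{C}$ into $P_{ij}$, which is epimorphic by hypothesis. Likewise, for $(\ell,\, t,\, r) \in T_2$ a $2$-simplex $w \in (S_2)_{\ell t r}$ amounts --- via Remark~\ref{span} and the limit cone $P_{\ell t r}$ from~\ref{thecoeskeleton} --- to an object $W \in \cc{C}$ equipped with an arrow $W \mr{} P_{\ell t r}$, the canonical map $(H_2)_w \mr{} P_{\ell t r}$ being exactly that arrow; so the second family is the family of all arrows from objects of $\cc{C}$ into $P_{\ell t r}$, again epimorphic by hypothesis. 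Definition~\ref{indexedhypercover} then closes this case.

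Case~\ref{1indexedhypercover} would go through identically, the only change being that in Construction~\ref{1spanref} one sets $S_1 = \cc{C}_{sp}$, so that $(S_1)_{ij}$ is the set of spans of $\cc{C}_{sp}$ lying over $U_i,\, U_j$ and $\{(H_1)_\ell \mr{} P_{ij}\}_{\ell \in (S_1)_{ij}}$ is precisely $\{W \mr{(u,\,v)} P_{ij}\}_{(u,\,v) \in \cc{C}_{sp}}$, epimorphic by hypothesis; the second family is described verbatim as before, since $(S_2)_{\ell t r}$ again ranges over all arrows $W \mr{} P_{\ell t r}$ with $W \in \cc{C}$.

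The step I expect to be the main obstacle is not this matching, which is routine bookkeeping, but the underlying identification used throughout: that a $2$-simplex $w$ of the span refinement lying over a fixed $(\ell,\, t,\, r) \in T_2$ is the same datum as an arrow $W \mr{} P_{\ell t r}$ with $W \in \cc{C}$. One must verify that the commutative $2$-span $w$ --- its three arrows $x,\, y,\, z$ from the vertex $W$ to the vertices $X,\, Y,\, Z$ of the spans $\ell,\, t,\, r$, compatibly with the legs of those spans --- is exactly a cone over the diagram whose limit is $P_{\ell t r}$, so that the $2$-simplices over $(\ell,\, t,\, r)$ correspond bijectively to the arrows from objects of $\cc{C}$ into $P_{\ell t r}$; the closure of $\cc{C}$, and respectively of $\cc{C}_{sp}$, under isomorphisms is what keeps these families from omitting any such arrow. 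Once this identification is in place, nothing beyond dimension $2$ requires separate treatment.
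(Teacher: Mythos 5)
Your proposal is correct and matches the paper's intent exactly: the paper offers no written proof, simply asserting the proposition ``From definition \ref{indexedhypercover}'' with the constructions \ref{0spanref} and \ref{1spanref} doing the work, and your unwinding of the fibres $(S_1)_{ij}$ and $(S_2)_{\ell t r}$ into the hypothesized epimorphic families, together with the identification of a $2$-simplex over $(\ell,\,t,\,r)$ with a cone into the limit $P_{\ell t r}$, is precisely the routine verification the author leaves to the reader. Nothing further is needed.
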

\begin{example} [Canonical hypercover refinement of the Cech simplicial
      family by connected objects]  \label{connectedhypercover} ${}$

We assume the topos (or the site) to be \emph{locally connected}
(\cite{AM}, \cite{MM}). Consider a cover $\cc{U} = (U, S, \zeta)$ ,
$U \mr{\zeta} {\gamma}^{*}I$ such that all the $U_i$ are connected
objects. Then, taking as $\cc{C}$ any set of connected generators the
\mbox{construction \ref{0hypercover}} yields an hypercover refinement of the
Cech simplicial family in which all the components are connected.  
\cqd\end{example}

\section{Fundamental groupoid of a simplicial family}

We will now associate a groupoid in $\cc{S}$ to any self-dual
simplicial family satisfying the following filling condition. We
remark that this condition does not hold for the Cech simplicial
family but it will hold for the span refinements.
%
%
\begin{definition} \label{conditionG}
Let $H_\bullet \mr{\xi}
\gamma^*(S_\bullet)$, $\tau$, be any
 self-dual simplicial family, we say that \emph{condition $G$} is
 satisfied if the following holds: 
$$
\xymatrix@R=5ex
         { 
          {} \\ \textrm{For all} \; \; i \mr{\ell} j \, \in S_1 \; \textrm{there exists a 2-simplex} \; w \in S_2,
         } 
\hspace{2ex} 
\xymatrix@R=1.5ex
         {
          {} & j \ar[rdd]^{\ell^{op}}
         \\
          {} & w
         \\
          i \ar[ruu]^{\ell} \ar[rr]^{\partial_1(w)} & {} & i \,,
         }
$$
such that $(d_1)_{\partial_1(w)} =(d_0)_{\partial_1(w)}\,$.
\end{definition}
\begin{proposition} \label{constructionG}
Any $0$-span or $1$-span simplicial refinement \mbox{$H_\bullet \mr{\xi}
\gamma^*(S_\bullet)$}
of a family $U \mr{\zeta} {\gamma}^{*}I$  as in constructions
\ref{0hypercover} or \ref{1hypercover} satisfies condition G.
\end{proposition}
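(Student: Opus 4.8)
The plan is to produce, for an arbitrary $1$-simplex $i \mr{\ell} j$ of the refinement, an explicit $2$-simplex $w$ with the required shape $\xymatrix@R=0.8ex{{}\\ i \ar[r]^{\ell} & j \ar[r]^{\ell^{op}} & i}$ (top edges) and bottom edge $\partial_1(w)$ whose associated span is \emph{symmetric}, i.e.\ $(d_1)_{\partial_1(w)} = (d_0)_{\partial_1(w)}$. Recall that in constructions \ref{0hypercover} and \ref{1hypercover} the set $S_2$ consists of \emph{all} $2$-spans over the base objects $U_\bullet$ built from objects (resp.\ $1$-spans) of $\cc{C}$ (resp.\ $\cc{C}_{sp}$), so to verify condition $G$ it suffices to exhibit one such $2$-span with faces $\partial_2(w)=\ell$, $\partial_0(w)=\ell^{op}$, and $\partial_1(w)$ a symmetric span in the allowed class; once we write it down, membership in $S_2$ is automatic.

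First I would treat the $0$-span case (construction \ref{0hypercover}). Write $\ell$ as the span $U_i \ml{u} V \mr{v} U_j$ with $V \in \cc{C}$. Set $W = V$, with the three ``top'' legs $x = \mathrm{id}_V : W \to X$, $z = \mathrm{id}_V : W \to Z$ (so that $X = \partial_2(w) = \ell$ and $Z = \partial_0(w) = \ell^{op}$, which is $U_j \ml{v} V \mr{u} U_i$), and the middle leg $y : W \to Y$, where $Y = \partial_1(w)$ is declared to be the span $U_i \ml{u} V \mr{u} U_i$ — note both legs of $Y$ are the \emph{same} map $u$, so $(d_1)_Y = u = (d_0)_Y$ as required. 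One checks the cocycle/commutativity conditions of a $2$-span: the composite $W \to X \to U_i$ is $u$, $W \to Y \to U_i$ is $u$, $W \to X \to U_j$ is $v$, $W \to Z \to U_j$ is $v$, $W \to Y \to U_i$ matches $W\to Z \to U_i = u$; all agree, so this really is a $2$-span of the required shape. Since $V \in \cc{C}$ and $U_i \ml{u} V \mr{u} U_i \in \cc{C}$ is not required for the $0$-span construction (only the vertex objects must lie in $\cc{C}$, which they do, all being copies of $V$), we have $w \in S_2$ and $\partial_1(w) = Y$ is symmetric. This is exactly the simplicial identity pattern already displayed for $\sigma_1(\ell)$-type fillers in the excerpt, adapted so the bottom vertex is $i$ rather than $j$.

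For the $1$-span case (construction \ref{1hypercover}) the same $W = V$, $Y = (U_i \ml{u} V \mr{u} U_i)$ works, but now I must additionally check that $Y$, as a $1$-span, lies in $\cc{C}_{sp}$: this is guaranteed by the closure hypothesis in construction \ref{1hypercover}, which explicitly requires $U_i \ml{u} V \mr{u} U_i \in \cc{C}_{sp}$ (and $U_j \ml{v} V \mr{v} U_j \in \cc{C}_{sp}$) for every $U_i \ml{u} V \mr{v} U_j \in \cc{C}_{sp}$. The span $\ell^{op}$ lies in $\cc{C}_{sp}$ by the closure under dual spans, and $\ell = X \in \cc{C}_{sp}$ by hypothesis. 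So again $w \in S_2$, and $\partial_1(w)$ is symmetric. The only mildly delicate point — and the one I would write out carefully — is bookkeeping the orientation conventions ($\partial_2$ vs.\ $\partial_0$, and which leg of each face span is $(d_0)$ vs.\ $(d_1)$) so that the top path reads $i \mr{\ell} j \mr{\ell^{op}} i$ exactly as in Definition \ref{conditionG}; after that, condition $G$ is immediate and there is no further obstacle.
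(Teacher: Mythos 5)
Your proof is correct and takes essentially the same approach as the paper's: the paper also takes $W = V$ with middle face the symmetric span $U_i \ml{u} V \mr{u} U_i$ (so that $(d_1)_{\partial_1(w)} = (d_0)_{\partial_1(w)} = u$) and, in the $1$-span case, invokes the closure hypotheses of construction \ref{1spanref}. The only cosmetic difference is that in the paper $\ell^{op}$ has as vertex the pullback $V^{op}$ rather than $V$ itself, so the leg of the $2$-span onto that face is $(\tau_1)_\ell$ rather than $\mathrm{id}_V$; this changes nothing of substance.
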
 
\begin{proof}
Let $\ell \in S_1$ be any 1-simplex, and let $w \in S_2$ be as follows: 
$$ 
\xymatrix@C=3ex@R=3ex
         {
         \\
          {} & {} & V \ar[dl]_{u} \ar[dr]^{v} & {}
         \\
          \ell \;=\;  \hspace{-4ex}
         & U_i & {} & U_j\,, 
         & {}
         }
\xymatrix@C=4ex@R=3ex
         {
          {} & {} & {} & V
          \ar[d]^{id} \ar@/^1.1pc/[rdd]^{(\tau_1)_\ell} \ar@/_1.1pc/[ldd]_{id} 
         \\
          {} & {} & {} & V  \ar@/^1.5pc/[rrdd]^{u} \ar@/_1.5pc/[lldd]_{u}
         \\
           {} & {} & V \ar[ld]^{u} \ar[rd]_{v} 
          & {} & V^{op} \ar[ld]^{u^{op}} \ar[rd]_{v^{op}}
         \\
          w \;=\;  \hspace{-4ex}
          & U_i & {} & U_j & {} & U_i\, 
         }
$$
It is clear that $w$ meets the requirements of condition G. 
\end{proof}

\begin{proposition} [G-Fundamental Groupoid of a simplicial
    family] \label{groupoid} ${}$

Let $H_\bullet \mr{\xi}
\gamma^*(S_\bullet)$, $\tau$, be a self dual simplicial family satisfying condition
G.
%
%
%
Consider the fundamental category of the
simplicial set $S_\bullet$ \mbox{(proposition
\ref{fundamentalcat}).} Add to the equivalence relation that
  defines the morphism the following pairs:

\vspace{1ex}

(2)  $(i \mr{\ell} j) \sim  (i \mr{t} j)$ if there is a morphism of spans:
$$
\xymatrix@C=10ex@R=1ex
         {
          {} & (H_1)_\ell \ar[dl]_{(d_0)_\ell} 
                          \ar[dr]^{(d_1)_\ell}
                          \ar[dd]
         \\
          (H_0)_i  & {} & (H_0)_j
         \\
          {} & (H_1)_{t} \ar[ul]^{(d_0)_{t}}
                         \ar[ur]_{(d_1)_{t}}
         }
$$
Then, the resulting category is groupoid.
\end{proposition}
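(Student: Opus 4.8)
The plan is to show that every basic morphism of the quotient category is invertible, from which the statement follows formally. Recall that the objects are $S_0$ and that every morphism of the fundamental category, hence of the further quotient by (2), is a composite of basic morphisms $i \mr{\ell} j$, $\ell \in S_1$, with $\sigma_0(i)$ playing the role of $id_i$ (this survives the quotient, which only identifies more morphisms). If every basic morphism admits a left inverse, then so does every composite of such, telescoping the left inverses in reverse order; and a category all of whose morphisms have a left inverse is automatically a groupoid: if $g\circ f = id$, choose $h$ with $h\circ g = id$; then $h = h\circ(g\circ f) = (h\circ g)\circ f = f$, so $f\circ g = h\circ g = id$. Thus it suffices to produce, for each $i \mr{\ell} j$, a left inverse $j \to i$; the candidate is $\ell^{op} = \tau_1(\ell)$, which exists because $\cc{H}_\bullet$ is self-dual.

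So fix $i \mr{\ell} j \in S_1$; I will show $\ell^{op}\circ\ell = id_i$ in the quotient. Apply condition G to $\ell$: it yields a $2$-simplex $w \in S_2$ with $\partial_2(w) = \ell$, $\partial_0(w) = \ell^{op}$ and $\partial_1(w) = t$, where $i \mr{t} i$ is a loop whose associated span satisfies $(d_0)_t = (d_1)_t$; write $p\colon (H_1)_t \to (H_0)_i$ for this common map. Relation (1a) applied to $w$ gives $\partial_1(w) \sim (\partial_0(w)\,\partial_2(w))$, that is, $t \sim (\ell^{op}\,\ell)$, which as a morphism is exactly $\ell^{op}\circ\ell$. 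It remains to see that $t \sim \sigma_0(i)$.

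For this, consider the map $(s_0)_i \circ p \colon (H_1)_t \to (H_1)_{\sigma_0(i)}$. The simplicial identities for $H_\bullet$ give $(d_0)_{\sigma_0(i)}\circ(s_0)_i = id_{(H_0)_i} = (d_1)_{\sigma_0(i)}\circ(s_0)_i$, so $(d_\epsilon)_{\sigma_0(i)}\circ\big((s_0)_i\circ p\big) = p = (d_\epsilon)_t$ for $\epsilon = 0,1$; hence $(s_0)_i\circ p$ is a morphism of spans from the span of $t$ to the span of $\sigma_0(i)$ in the sense of (2) (for loops the orientation convention drawn in (2) is immaterial, since both legs coincide). Therefore relation (2) forces $t \sim \sigma_0(i) = id_i$, and combining with the previous paragraph, $\ell^{op}\circ\ell = id_i$. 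Since condition G was invoked for an arbitrary $1$-simplex, the same holds for every basic morphism, and by the first paragraph the quotient category is a groupoid.

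The conceptual point, and the only real content, lies in the second and third paragraphs: condition G together with relation (1a) exhibits the putative composite $\ell^{op}\circ\ell$ as a loop $t$ whose $H_1$-span is ``constant'' (both feet equal), and relation (2) is precisely designed to collapse such a loop to the identity via the degeneracy $(s_0)_i$. The step most in need of care is the verification that $(s_0)_i\circ p$ satisfies the leg-compatibility defining a morphism of spans; this reduces to the identities $d_0 s_0 = d_1 s_0 = id$ on the simplicial object $H_\bullet$. Everything else — the bookkeeping of concatenation versus composition order in the fundamental category, and the fact that the generated relation is a congruence so the quotient is indeed a category — is routine and already implicit in Proposition \ref{fundamentalcat}.
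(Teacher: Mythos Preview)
Your proof is correct and follows essentially the same route as the paper: use condition~G to obtain $w$ with $(\ell^{op}\,\ell)\sim\partial_1(w)=t$, then exhibit the span morphism $(s_0)_i\circ p\colon (H_1)_t\to (H_1)_{\sigma_0(i)}$ so that relation~(2) forces $t\sim\sigma_0(i)$. The only cosmetic difference is that the paper applies condition~G a second time to $\ell^{op}$ to get the other identity $(\ell\,\ell^{op})\sim\sigma_0(j)$ directly, whereas you instead invoke the standard lemma that a category in which every morphism has a left inverse is a groupoid; both endgames are equally valid.
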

\begin{proof}
We have:
$$
\xymatrix@C=10ex@R=1ex
         {
          {} & (H_1)_{\partial_1(w)} \ar[dd]
          \ar[ddl]_{(d_1)_{\partial_1(w)}}
          \ar[ddr]^{(d_0)_{\partial_1(w)}} 
         \\
         \\
          (H_0)_i & (H_0)_i \ar[l]_{id} 
                          \ar[r]^{id}
                          \ar[dd]^{(s_0)_i} 
          & (H_0)_i
         \\
           {}  & {} & {}
         \\
          {} & (H_1)_{\sigma_0(i)} \ar[uul]^{(d_1)_{\sigma_0(i)}}
                                  \ar[uur]_{(d_0)_{\sigma_0(i)}}
         }
$$
This shows that $(\ell^{op}\, \ell)
\sim \partial_1(w) \sim \sigma_0(i)$. We use the assumption in the 1-simplex
$\ell^{op}$ to show $(\ell \, \ell^{op}) \sim \sigma_0(j)$. 
\end{proof}
\emph{The G-fundamental groupoid of a family does not coincide with
  the fundamental groupoid of the index simplicial set}. Because of
this we have added the letter $G$ to the word $fundamental$ for the
groupoid constructed in proposition \ref{groupoid}.  

\section{Descent}

Let $S_\bullet$ be a simplicial set, $S_0 = I$, 
\begin{definition} \label{descent}
A $S_\bullet$-descent datum on a family indexed by $I$, $R \mr{} I$,
is an isomorphism in $\cc{S}_{/I}$, and it consists of the following
data: 

\hspace{-4ex} For each $1$-simplex $i \mr{\ell} j$ a bijection $R_i
\mr{s_\ell} R_j$ such that: 

\vspace{1ex}

1) For each vertex $i \in I = S_0$, \hspace{2ex} $s_{\sigma_0(i)} = id_{R_i}$.

\vspace{1ex}

2) For each $2$-simplex $w \in S_2$,  \hspace{2ex}
 $s_{\partial_1(w)} = s_{\partial_0(w)} \circ s_{\partial_2(w)}$.
%
%
\end{definition}
Recall that a \emph{(left) action} of a small category with set of
objects $I$ in a $I$-indexed family $R \mr{} I$ is a (covariant)
set-valued functor $R$, $R(i) = R_i$, and for $x \in R_i$, $\ell
\!\centerdot \! x = R(\ell)(x)$. We say that the action is \emph{by
  isomorphisms} if $R(\ell)$ is a bijection for all $\ell$. Actions by
isomorphisms are the same thing that actions of the groupoid resulting
by formally inverting all the arrows of the category. The following is
straightforward. For the record:  
\begin{proposition} \label{descent=action}
For any simplicial set $S_\bullet$, there is a one to one
correspondence between $S_\bullet$-descent data and left actions by
isomorphisms of the fundamental category, that is, actions of the
fundamental groupoid. With the obvious definition of morphisms this
bijection extends to an isomorphism of categories (and of topoi). 
\cqd\end{proposition}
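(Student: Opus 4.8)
### Proof proposal

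The plan is to exhibit the claimed bijection directly on the level of data and then check that it is compatible with morphisms, turning it into an equivalence (in fact isomorphism) of categories. First I would recall the explicit description of the fundamental category $\mathcal{C}(S_\bullet)$ from Proposition~\ref{fundamentalcat}: its objects are $I=S_0$, its morphisms are equivalence classes of premorphisms $\phi=(\ell_n\ldots\ell_1)$ under the relation generated by the basic pairs coming from $2$-simplices and the identification $\sigma_0(i)=id_i$. Given a $S_\bullet$-descent datum $(R\to I, (s_\ell)_\ell)$, I would define a functor $R\colon \mathcal{C}(S_\bullet)\to \mathcal{S}et$ by $R(i)=R_i$ on objects and on a basic morphism $\ell$ by $R(\ell)=s_\ell$, extending to a general premorphism by composition, $R(\ell_n\ldots\ell_1)=s_{\ell_n}\circ\cdots\circ s_{\ell_1}$. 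The two axioms of a descent datum are exactly what is needed for this to be well defined on equivalence classes: axiom (1), $s_{\sigma_0(i)}=id_{R_i}$, handles the relation $\sigma_0(i)\sim id_i$; axiom (2), $s_{\partial_1(w)}=s_{\partial_0(w)}\circ s_{\partial_2(w)}$, is precisely the statement that the basic pair $\partial_1(w)\sim(\partial_0(w)\,\partial_2(w))$ is respected. Functoriality (respect for concatenation and for identities) is then immediate. Since each $s_\ell$ is a bijection, every $R(\phi)$ is a bijection, so $R$ is an action by isomorphisms; equivalently, by the remark preceding the statement, it factors uniquely through the fundamental groupoid.

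Next I would construct the inverse assignment. Given a left action by isomorphisms $R\colon\mathcal{C}(S_\bullet)\to\mathcal{S}et$, set $R_i=R(i)$ and, for each basic premorphism $\ell\in S_1$ with $i\xrightarrow{\ell}j$, let $s_\ell\colon R_i\to R_j$ be $R(\ell)$, a bijection by hypothesis. Axiom (1) follows because $\sigma_0(i)$ represents $id_i$ in $\mathcal{C}(S_\bullet)$, so $R(\sigma_0(i))=id_{R_i}$; axiom (2) follows because for $w\in S_2$ the premorphisms $\partial_1(w)$ and $(\partial_0(w)\,\partial_2(w))$ are identified in $\mathcal{C}(S_\bullet)$, hence have the same image under $R$, and functoriality computes the latter as $R(\partial_0(w))\circ R(\partial_2(w))$. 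These two constructions are visibly mutually inverse: starting from a descent datum, building the action, and reading off the $s_\ell$'s returns the original family of bijections, since $s_\ell$ is defined to be $R(\ell)$ and basic premorphisms generate everything; conversely an action is determined by its values on objects and on the generating morphisms $\ell$, so it is recovered from the associated descent datum.

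Finally I would upgrade this bijection to an isomorphism of categories. On the descent side, a morphism $(R\to I,(s_\ell))\to(R'\to I,(s'_\ell))$ is a map $f\colon R\to R'$ over $I$ commuting with the transition bijections, i.e.\ $f_j\circ s_\ell=s'_\ell\circ f_i$ for every $\ell$; on the action side a morphism is a natural transformation of functors $\mathcal{C}(S_\bullet)\to\mathcal{S}et$. But a natural transformation is exactly a family $(f_i)_{i\in I}$ of maps satisfying the naturality square on \emph{all} morphisms, and since the morphisms of $\mathcal{C}(S_\bullet)$ are generated by the $\ell$'s, naturality on all morphisms is equivalent to naturality on the generators, which is the descent-morphism condition. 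Thus the two notions of morphism literally coincide, and the object bijection together with the identity on morphisms gives an isomorphism of categories; the parenthetical ``and of topoi'' then follows because both categories are (equivalent to) the category of $\gamma^*$-internal actions, i.e.\ the topos $\beta\mathbf{G}$ of the fundamental groupoid, and an isomorphism of categories between topoi is in particular a geometric equivalence. I expect no genuine obstacle here — every step is a direct unwinding of definitions — the only point requiring a little care is verifying that the relation generated by the basic pairs does not force any further identifications beyond those guaranteed by descent axioms (1) and (2), i.e.\ that $R(\phi)$ is genuinely well defined on equivalence classes; this is handled by the standard observation that to check a functor respects a relation generated by a set of pairs it suffices to check it on those pairs, which are exactly axioms (1) and (2).
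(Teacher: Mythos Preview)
Your proposal is correct and is exactly the direct unwinding of definitions that the paper has in mind; indeed the paper gives no proof at all, simply prefacing the statement with ``The following is straightforward'' and closing it with a box. One minor quibble of phrasing: in the fundamental category of Proposition~\ref{fundamentalcat} there is no separate identity to which $\sigma_0(i)$ is \emph{related}; rather $\sigma_0(i)$ \emph{is} the identity (its neutrality being forced by the $2$-simplices $\sigma_0(\ell)$ and $\sigma_1(\ell)$ via relation (1a)), so axiom (1) of a descent datum is precisely the condition that the assignment $\ell\mapsto s_\ell$ respect identities, not that it respect an extra relation.
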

Let $H_\bullet \mr{} S_\bullet$ be any simplicial family, $S_0 = I$, $H_0 = U$,
\begin{definition} \label{descentf}
A $H_\bullet$-descent datum $\sigma$ on an object 
$\gamma^*(R) \, \times_{\gamma^*(I)} \, U \mr{} U$ (where $R \mr{} I$
is a family indexed by I), is an isomorphism in $\cc{E}_{/U}$, and it
consists of the following data:  

For each $1$-simplex $i \mr{\ell} j$ an isomorphism $\sigma_\ell$:
$$
\xymatrix
         {
            \gamma^{*}R_i \times (H_1)_\ell  \ar[r]^{\sigma_{\ell}}
                                                   \ar[d]
          & \gamma^{*}R_j \times (H_1)_{\ell^{op}} \ar[d] 
          \\
            (H_1)\ell  \ar[r]^{(\tau_1)_\ell}  
          & (H_1)_{\ell^{op}}
         }
$$
Notice that $\sigma_\ell$ is completely determined by its first projection 
 that
we denote with the same letter \mbox{$\gamma^{*}R_i \times (H_1)_\ell
  \mr{\sigma_\ell} \gamma^{*}R_j$}, or  $(H_1)_\ell \mr{\widehat{\sigma}_\ell}
\gamma^{*}R_j^{\textstyle \,\gamma^{*}R_i}$. The identity and cocycle conditions 
are: 

\vspace{1ex}

 (1) For each $i \in S_0$,
$\; y \in \gamma^{*}R_i$, $x \in (H_0)_i\,$:  

\vspace{-1ex}

\begin{center}
$\; \sigma_{\sigma_0(i)}(y,\,(s_0)_i(x)) \;=\; (y,\,(s_0)_i(x))\,$:
$\; \; \widehat{\sigma}_{\sigma_0(i)}((s_0)_i(x)) \;=\; id_{\gamma^{*}R_i}$,
\end{center}

\vspace{1ex}

 (2) For each $w \in S_2$, 
     $\; y \in \gamma^{*}R_i$, $ x \in (H_2)_w\,$:  

\vspace{-1ex}

\begin{center} 
$\; \sigma_{\partial_0(w)}(\sigma_{\partial_2(w)}(y,\, (d_2)_w(x)),\,
  (d_0)_w(x)) \;=\; 
\sigma_{\partial_1(w)}(y,\, (d_1)_w(x))\,$: \\ \hspace{2ex} 
$\; \widehat{\sigma}_{\partial_0(w)}(d_0(x)) \circ
\widehat{\sigma}_{\partial_2(w)}(d_2(x)) \;=\;  
\widehat{\sigma}_{\partial_1(w)}(d_1(x))$,
\end{center}
\end{definition}
The equations above correspond to commutative diagrams, the
letters $x, \, y$ can be thought as internal variables, or simply
as a way to indicate how to construct the diagram.

Recall now from \cite[10.3]{AM}.
\begin{proposition} \label{hyper=all}
Given a cover $U \mr{\zeta} \gamma^*I$, a simplicial hypercover
refinement of the Cech simplicial family (see proposition \ref{counit}).
$$
\xymatrix
        {
          H_\bullet \ar[r]^{h_\bullet} \ar[d]^{\zeta_\bullet} 
        & U_\bullet \ar[d]^{\zeta_\bullet}
        \\
          \gamma^{*} S_\bullet \ar[r]^{\gamma^*\alpha_\bullet} 
        & \gamma^{*} N_\bullet
        }
$$
and a family of sets $R \mr{} I$ indexed by I, consider  
for all $i \mr{\ell} j \in S_1$, $(i,\,
j) = \alpha_1(\ell)$, the following diagrams:
$$
\xymatrix
         {
            \gamma^{*}R_i \times U_i \times U_j  \ar[r]^{\sigma_{j,\,i}}
          & \gamma^{*}R_j \times U_j \times U_i 
          \\
            \gamma^{*}R_i \times (H_1)_\ell \ar[r]^{\sigma_\ell}
            \ar[u]_{\gamma^{*}R_i \times (h_1)_\ell}  
          & \gamma^{*}R_j \times (H_1)_{\ell^{op}}
            \ar[u]_{\gamma^{*}R_j \times (h_1)_{\ell^{op}}}  
         }
\hspace{5ex}
\xymatrix
         {
          U_i \times U_j  \ar[dr]^{\widehat{\sigma}_{j,\,i}}
          \\
          (H_1)_\ell \ar[u]^{(h_1)_\ell}
                     \ar[r]^{\widehat{\sigma}_\ell}
          & \gamma^{*}R_j^{\textstyle \,\gamma^{*}R_i}
         }
$$
Then, composing with $(h_1)_\ell$, that is the correspondence
\mbox{$\sigma_{j,\,i}  \;\mapsto\; \sigma_\ell$}, where $\sigma_\ell$
    is such that  
\mbox{$\sigma_{j,\,i} \circ \gamma^{*}R_i \times (h_1)_\ell =
\gamma^{*}R_j \times (h_1)_{\ell^{op}} \circ \sigma_\ell\;$} or     
\mbox{$\;\widehat{\sigma}_{j,\,i} \;\mapsto\; 
\widehat{\sigma}_\ell = \widehat{\sigma}_{j,\,i} \circ (h_1)_\ell$,}
induces a bijection between 
$H_\bullet$-descent data 
$\sigma_\ell$  and $U_\bullet$-descent data $\sigma_{j,\,i}$  on
objects of the form   $\gamma^*(R) \, 
\times_{\gamma^*(I)} \, U \mr{} U$. This actually establishes an isomorphism of
the respective categories. 
\end{proposition}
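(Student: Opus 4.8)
The plan is to exhibit the correspondence $\sigma_{j,i} \mapsto \sigma_\ell$ as literally restriction along the canonical map $h_\bullet$, and to check that it is a bijection on descent data by producing an inverse coming from the hypercover condition. First I would fix notation: by Proposition \ref{counit} the hypercover refinement comes equipped with maps $(h_1)_\ell \colon (H_1)_\ell \to U_i \times U_j$ and $(h_2)_w \colon (H_2)_w \to U_i \times U_j \times U_k$ (the components $(p_2,p_1,p_0)$ of Remark \ref{span}\eqref{span3}), and the duality on $U_\bullet$ is the symmetry of the cartesian product. Given a $U_\bullet$-descent datum, i.e.\ a family $\{\sigma_{j,i}\}_{(i,j)\in N_1}$ satisfying the identity and cocycle conditions of Definition \ref{descentf} for the Cech simplicial family $U_\bullet \to \gamma^*N_\bullet$, I set $\widehat{\sigma}_\ell := \widehat{\sigma}_{j,i}\circ (h_1)_\ell$ for each $\ell\in S_1$ with $\alpha_1(\ell)=(i,j)$; this is well defined because $(h_1)_\ell$ lands in $U_i\times U_j = (U_1)_{\alpha_1(\ell)}$ and is exactly the square in the statement commuting.

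The first substantive step is to verify that $\{\sigma_\ell\}$ is indeed an $H_\bullet$-descent datum. Condition (1) of Definition \ref{descentf} for $H_\bullet$ follows from condition (1) for $U_\bullet$ together with the fact that $h_\bullet$ commutes with degeneracies, so $(h_1)_{\sigma_0(i)}$ factors the identity-span map $(s_0)_i$ through the Cech degeneracy (diagonal $U_i \to U_i\times U_i$); plugging $(s_0)_i(x)$ into $\widehat{\sigma}_\ell$ reduces to $\widehat{\sigma}_{i,i}$ evaluated on the diagonal, which is $id$ by hypothesis on $\sigma_{j,i}$. Condition (2) for $H_\bullet$ at a $2$-simplex $w\in S_2$ with $\alpha_2(w)=(i,j,k)$ follows by naturality: $(d_r)_w$ composed with $(h_1)_{\partial_r(w)}$ equals $(h_2)_w$ composed with the corresponding Cech face (projection), because $h_\bullet$ is a simplicial morphism; so the cocycle identity for $\{\sigma_\ell\}$ evaluated on $x\in (H_2)_w$ is the pullback along $(h_2)_w$ of the cocycle identity for $\{\sigma_{j,i}\}$ evaluated on $(h_2)_w(x)\in U_i\times U_j\times U_k$. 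Morphisms of descent data are handled the same way — precomposition with $h_\bullet$ — giving a functor from $U_\bullet$-descent data to $H_\bullet$-descent data.

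The main obstacle, and the place where the hypercover hypothesis is essential, is constructing the inverse: showing that every $H_\bullet$-descent datum $\{\widehat{\sigma}_\ell\}$ arises from a unique $\{\widehat{\sigma}_{j,i}\}$. Surjectivity and injectivity both hinge on the epimorphism $H_1 \to (cosk_0 H_\bullet)_1 = U_1$, i.e.\ for each $(i,j)\in N_1$ the family $\{(H_1)_\ell \to P_{ij}=U_i\times U_j\}_{\ell\in (S_1)_{ij}}$ is jointly epimorphic (Definition \ref{indexedhypercover}). Injectivity: if $\widehat{\sigma}_{j,i}\circ (h_1)_\ell = \widehat{\sigma}'_{j,i}\circ (h_1)_\ell$ for all $\ell$ over $(i,j)$, then since the $(h_1)_\ell$ are jointly epi, $\widehat{\sigma}_{j,i}=\widehat{\sigma}'_{j,i}$. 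Surjectivity is the real work: given $\{\widehat{\sigma}_\ell\}$, I must build $\widehat{\sigma}_{j,i}\colon U_i\times U_j \to \gamma^*R_j^{\gamma^*R_i}$ with $\widehat{\sigma}_{j,i}\circ (h_1)_\ell = \widehat{\sigma}_\ell$ for every $\ell$. One does this by a coequalizer/descent argument along the epi $H_1 \to U_1$: the two composites $H_2 \rightrightarrows H_1$ (or rather the kernel-pair data at level $U_1$, captured by $(cosk_1 H_\bullet)_2$ and the epi $H_2 \to (cosk_1 H_\bullet)_2$) give the compatibility that the $\widehat{\sigma}_\ell$ must satisfy to glue; the $H_\bullet$-cocycle condition (2), combined with the identity condition to invert the appropriate $\sigma$'s, is exactly this compatibility. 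Concretely, for two $1$-simplices $\ell, \ell'$ over the same $(i,j)$, the pullback $(H_1)_\ell \times_{U_i\times U_j} (H_1)_{\ell'}$ is covered (via the level-$2$ hypercover epi together with the filling provided in $(cosk_1 S_\bullet)_2$) by $2$-simplices, so $\widehat{\sigma}_\ell$ and $\widehat{\sigma}_{\ell'}$ agree after pullback there, yielding a well-defined factorization through the (effective epi) colimit $U_1$. Finally one checks the resulting $\{\widehat{\sigma}_{j,i}\}$ satisfies the Cech identity and cocycle conditions — again by pulling back along the jointly-epic $(h_1)$ and $(h_2)$ families and invoking the $H_\bullet$-conditions — and that the two constructions are mutually inverse and natural in morphisms, which upgrades the bijection to the claimed isomorphism of categories (and of topoi, since both are subcategories of presheaf-type categories closed under the relevant limits). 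I would remark that this is precisely the content of \cite{AM}, Proposition 10.3, transported to the indexed/simplicial-family setting, so the argument is the standard "hypercovers compute the same descent as covers" comparison, with the only new bookkeeping being the passage through the index simplicial set $S_\bullet$ and its duality $\tau$.
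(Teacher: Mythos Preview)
Your proposal is correct and follows essentially the same approach as the paper's (sketch of a) proof: injectivity from the jointly epimorphic family $\{(h_1)_\ell\}_{\ell \in (S_1)_{ij}}$, surjectivity by showing the $\widehat{\sigma}_\ell$ are compatible (via the level-$2$ hypercover epimorphism and the $H_\bullet$-cocycle condition) and hence glue to a unique $\widehat{\sigma}_{j,i}$, and finally verifying the Cech identity and cocycle equations using the epimorphic family $\{(H_2)_w \to U_i\times U_j\times U_k\}_{w\in (S_2)_{ijk}}$. Your write-up is in fact more detailed than the paper's sketch, in particular in checking that the forward map lands in $H_\bullet$-descent data and in spelling out the coequalizer/compatibility argument for the inverse.
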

\begin{proof}
We give only an sketch of the proof.  The correspondence is
injective since the family $\{(h_1)_\ell \}_{\ell \in (S_1)_{ij}}$ is
epimorphic. Given a $H_\bullet$-descent datum $\sigma_\ell$, it can be seen that
the family $\{\widehat{\sigma}_\ell 
\}_{\ell \in (S_1)_{ij}}$ is compatible, thus there exists a unique
$\widehat{\sigma}_{j,\,i}$ such that $\widehat{\sigma}_\ell =
\widehat{\sigma}_{j,\,i} \circ (h_1)_\ell$ all $\ell \in
(S_1)_{ij}$. The cocycle and identity equations follow
the fact that the
family (remark \ref{span}, \ref{span3}.) 
\mbox{
$\xymatrix@C=18ex
          {
           \{(H_2)_w \ar[r]^{\hspace{-8ex}((p_2)_w,\,(p_1)_w,\,(p_0)_w)} 
          & U_i \times U_j \times U_k\}_{w \in (S_2)_{ijk}} 
          }
$
} 
is epimorphic.   
\end{proof}
\begin{remark} \label{hyper=allremark}
A $S_\bullet$ descent datum $s_\ell$ on a family $R \mr{} I$ induces a
\mbox{$H_\bullet$-descent} datum $\sigma_\ell = \gamma^*(s_\ell)
\times (\tau_1)_\ell$ on the object $\gamma^*(R) \times_{\gamma^*(I)}
U$.  
\cqd
\end{remark}
\begin{sinnadastandard} \label{consistent}
We say that a $S_\bullet$-descent datum (as in definition \ref{descent}) is 
\emph{consistent} if for each pair of $1$-simplexes $\ell,\, t \in
S_1$ as in (2) proposition \ref{groupoid},  $s_\ell = s_t$.
\end{sinnadastandard}
We have
(compare with proposition \ref{descent=action}): 
\begin{proposition} \label{descent=actionG}
Given any self dual simplicial family $H_\bullet \mr{\xi} 
\gamma^*(S_\bullet)$ satisfying condition
G, there is a one to one correspondence between
consistent \mbox{$S_\bullet$-descent} data and left actions of the
G-fundamental groupoid of the family (Proposition \ref{groupoid}). With
the obvious definition of morphisms this bijection extends to an
isomorphism of categories (and of topoi). 
\cqd\end{proposition}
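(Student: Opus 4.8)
The plan is to build directly on Proposition \ref{descent=action}, which already gives an isomorphism of categories between $S_\bullet$-descent data and actions of the fundamental groupoid of the simplicial set $S_\bullet$. The new content here is to pass from the fundamental groupoid of the simplicial set to the $G$-fundamental groupoid of the family (Proposition \ref{groupoid}), which is obtained from the fundamental category by imposing the extra relations (2) and then inverting arrows. So the proof is essentially a compatibility check: an action of the fundamental groupoid factors through the $G$-fundamental groupoid precisely when it respects the relations (2), and I want to identify ``respecting the relations (2)'' on the action side with ``consistency'' on the descent-datum side.

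First I would unwind the correspondence of Proposition \ref{descent=action} explicitly: a $S_\bullet$-descent datum assigns to each $1$-simplex $i\mr{\ell}j$ a bijection $s_\ell\colon R_i \to R_j$, and the corresponding groupoid action sends the morphism class $[\ell]$ to $R([\ell]) = s_\ell$ (well-defined by conditions (1),(2) of Definition \ref{descent}, which are exactly relations (1a) in Proposition \ref{fundamentalcat}). Next I would observe that the $G$-fundamental groupoid is the quotient of the fundamental groupoid by the further identifications $[\ell]=[t]$ whenever there is a morphism of spans between $(H_1)_\ell$ and $(H_1)_t$ over $(H_0)_i, (H_0)_j$ as in relation (2). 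An action of the fundamental groupoid descends to an action of this quotient if and only if it sends $[\ell]$ and $[t]$ to the same bijection whenever $\ell,t$ are so related — which, reading off the previous paragraph, is exactly the condition $s_\ell = s_t$, i.e.\ consistency in the sense of \ref{consistent}. Thus consistent descent data correspond bijectively to actions of the fundamental groupoid that factor through the $G$-fundamental groupoid, which are the same as actions of the $G$-fundamental groupoid itself (since the quotient functor is bijective on objects and full).

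For the ``morphisms'' and ``isomorphism of categories (and of topoi)'' clause I would just note that a morphism of $S_\bullet$-descent data is an $I$-indexed family of maps $R_i \to R'_i$ commuting with all the $s_\ell$; under the correspondence this is precisely an equivariant map of fundamental-groupoid actions, and the constraint of commuting with all $s_\ell$ is unaffected by passing to the quotient groupoid, so morphisms match on both sides with no extra condition. Hence the bijection on objects upgrades to a fully faithful, essentially surjective — indeed iso-on-objects — functor, an isomorphism of categories; and since both categories are categories of internal actions / descent data they are presheaf/sheaf-type topoi and the functor is a morphism of topoi, in fact an equivalence, by the same bookkeeping as in Proposition \ref{descent=action}.

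The only genuine point to be careful about — and the place I expect to spend the real work — is verifying that the relation (2) on the simplicial level translates cleanly into the plain equality $s_\ell = s_t$ on the level of bijections $R_i \to R'_j$, i.e.\ that a \emph{morphism of spans} $(H_1)_\ell \to (H_1)_t$ (not an isomorphism, and the family $(H_1)_\ell$ is merely nonempty) forces the induced descent bijections to agree rather than merely be related by some transport. The mechanism is that the descent datum is defined on the pulled-back object $\gamma^*(R)\times_{\gamma^*(I)}U$, whose fibre over a point of $(H_1)_\ell$ is $R_i$ on one side and $R_j$ on the other independently of which $1$-simplex $\ell$ carries that point; a span morphism $(H_1)_\ell \to (H_1)_t$ over $(H_0)_i,(H_0)_j$ then exhibits $s_\ell$ and $s_t$ as restrictions of the \emph{same} map on the $U$-level datum, forcing $s_\ell = s_t$ once one uses that the relevant comparison maps are (split) epic because the $(H_n)_w$ are nonempty and $\gamma^*$ is logical and preserves epimorphisms. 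I would make this precise by the same epimorphism argument used in the proof of Proposition \ref{hyper=all}, and otherwise leave the diagram-chase to the interested reader, consistently with the style of the surrounding propositions.
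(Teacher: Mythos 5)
Your first three paragraphs are a correct and complete proof, and they follow exactly the route the paper intends: the paper states this proposition without a written proof precisely because, once Proposition \ref{descent=action} is in hand, the claim reduces to the tautology that an action of the fundamental groupoid factors through the quotient by the relations (2) if and only if $s_\ell = s_t$ for every (2)-related pair of $1$-simplexes, which is verbatim the definition of consistency in \ref{consistent}. (One minor slip: the G-fundamental groupoid is obtained by adding the relations (2) to the fundamental \emph{category}, which Proposition \ref{groupoid} then shows is already a groupoid; no formal inversion is performed afterwards, though this changes nothing in your argument.)

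Your last paragraph, however, is a red herring, and the ``real work'' you locate there does not exist --- indeed the step you propose would fail if attempted. In this proposition the descent datum is a $S_\bullet$-descent datum in the sense of Definition \ref{descent}: a bare family of bijections $s_\ell\colon R_i\to R_j$ between \emph{sets}, subject only to the identity and cocycle conditions. Such a datum is not ``defined on the pulled-back object $\gamma^*(R)\times_{\gamma^*(I)}U$'', and a morphism of spans $(H_1)_\ell\to(H_1)_t$ places no constraint whatsoever on $s_\ell$ and $s_t$; that is exactly why consistency must be \emph{imposed} as a hypothesis rather than derived. The phenomenon you have in mind --- that a morphism of nonempty (action) spans forces the associated bijections to coincide --- belongs to Remark \ref{sieve} and Theorem \ref{main1}, where the $s_\ell$ are extracted from a $U_\bullet$-descent datum $\sigma_{j,i}$ on $\gamma^*(R)\times_{\gamma^*(I)}U$ and the epimorphism argument of Proposition \ref{hyper=all} genuinely applies; here there is nothing to check, and importing that machinery only obscures the fact that the proposition is immediate from the definitions.
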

 
\section{Covering projections}

We recall now the concept of \emph{covering projection} introduced in
\cite{D3}, for details we refer the reader to this source. Consider a
cover  
\mbox{$\cc{U} = U \mr{\zeta} {\gamma}^{*}I$}  in a topos $\,\cc{E}
\mr{\gamma} \cc{S}$, and the Cech simplicial family    
$\, {U}_\bullet \mr{\zeta_\bullet}  \gamma^*({N}_\bullet)$ 
(\mbox{example \ref{cechfamily}).} A locally constant object is an
object $X$ together with a trivialization structure $\theta$. This
structure consists in a family of isomorphisms $\{\theta_i\}_{i \in
  I\,}$: 
$$
\xymatrix
         {
           \gamma^{*}R_{i} \times U_{i} \ar[rr]^{\theta_{i}}
                                        \ar[rd] 
       & &  X \times U_{i}  \ar[ld]
       \\
         &  U_i
         }
$$
 where 
$R \mr{} I$, $\{R_i\}_{i \in I}$ is a family of sets. These objects
 are constructed by  descent (see \cite{G1}) on a $U_\bullet$-descent
 datum $\sigma$ on an object in $\cc{E}_{/U}$ of the form  
\mbox{$ \gamma^{*}R \, \times_{\gamma^{*}I} \, U  \mr{} U$}, 
$\{\gamma^{*}R_i \times U_i \mr{} U_i\}_{i \in I}$ (definition
\ref{descentf}). Such a descent datum    
consists of a
family of isomorphisms,  
$\{\sigma_{j,\,i}\}_{(i, \, j) \in N_1}$:
$$
\xymatrix
         {
            \gamma^{*}R_i \times U_i \times U_j  \ar[r]^{\sigma_{j,\,i}}
                                                 \ar[d]
          & \gamma^{*}R_j \times U_j \times U_i  \ar[d] 
          \\
            U_i \times U_j  \ar[r]^{\tau}  
          & U_j \times U_i
         }
$$
satisfying the corresponding  identity and cocycle conditions in
\mbox{definition \ref{descentf}.}
%
%
%
The relationship between the trivialization structure $\theta$ and the descent datum $\sigma$ is given in the following commutative diagram:
$$
\xymatrix
         {
            \gamma^{*}R_i \times U_i \times U_j 
                               \ar[r]^{\sigma_{j,\,i}}
                               \ar[d]^{\theta_i \times U_j}
          & \gamma^{*}R_j \times U_j \times U_i  
                               \ar[d]^{\theta_j \times U_i} 
          \\
            X \times U_i \times U_j  \ar[r]^{X \times \tau}  
          & X \times U_j \times U_i
         }
$$

Let $X  \sim  (R \to I, \, \sigma)$ be a locally constant object
determined by a \mbox{$U_\bullet$-descent} datum $\sigma$ on a cover $U \to \gamma^*
I$. Recall from \cite{D3} the following  
\begin{definition} \label{actionspan}
$
\xymatrix@R= 1.8ex
         {
          \\
 \textrm{An \emph{action span} for $X$ is a span $\; \ell \;= \;$}
         }        
\xymatrix@C=3ex@R=1ex
         {
          {} & V \ar[dl]_{u} \ar[dr]^{v}
         \\
         U_i & {} & U_j\,,\
         }
$
with  $V \neq \emptyset$, and such that there is a bijection $S_i
\mr{s_\ell} S_j$ (necessarily unique) such that the following diagram
commutes  
$$
\xymatrix
         {
            \gamma^{*}R_i \times U_i \times U_j 
                               \ar[r]^{\sigma_{j,\,i}}
          & \gamma^{*}R_j \times U_j \times U_i   
          \\
             \gamma^{*}R_i \times V  
                               \ar[u]_{\gamma^{*}S_i \times (u,\, v)}
                               \ar[r]^{\gamma^* s_\ell \times V}  
          &  \gamma^{*}R_j \times V
                               \ar[u]_{\gamma^{*}S_j \times (v,\, u)}
         }
$$
\end{definition}
\begin{remark} \label{sieve}
Given a morphism of non empty spans $\ell' \to \ell$, $V' \to V$ 
, if $\ell$ is an action span, then so
it is $\ell'$, and $s_{\ell'} = s_\ell$. 
\qed\end{remark}

\begin{definition} \cite[2.12]{D3} \label{coveringprojectionobject}
We say that a locally constant object \mbox{$X \sim (S \to I, \, \sigma)$}
trivialized by a cover $U \to \gamma^* I$ is a \emph{covering
  projection} if, for each \mbox{$(i, \, j) \in N_1$}, the family
$V \mr{(u, \, v)} U_i \times U_j$ is an epimorphic family, where 
$(V,\, u, \, v)$ ranges over all action spans. By remark \ref{sieve}
it is equivalent to restrict $V$ to a site of definition. 
\end{definition}
Every span with connected vertex is an action span, thus we have:

\vspace{-1ex}

\begin{proposition} \label{cparesdatum}
In a locally connected topos every locally constant object is a
covering projection. \cqd
\end{proposition}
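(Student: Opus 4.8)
The plan is to deduce Proposition~\ref{cparesdatum} directly from Definition~\ref{coveringprojectionobject} together with the sentence immediately preceding it, namely that \emph{every span with connected vertex is an action span}. So the first thing I would do is establish that preliminary claim, since everything else is formal. Fix a locally constant object $X \sim (R \to I, \sigma)$ trivialized by a cover $U \mr{\zeta} \gamma^*I$, let $(i,j) \in N_1$, and let $\ell = (U_i \ml{u} V \mr{v} U_j)$ be a span with $V$ connected (and nonempty, as we always assume). Pulling back the trivialization isomorphisms $\theta_i, \theta_j$ along $u$ and $v$ and using the compatibility square relating $\theta$ and $\sigma$, one gets an isomorphism $\gamma^*R_i \times V \cong X \times V \cong \gamma^*R_j \times V$ over $V$. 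The point is that such an isomorphism over a \emph{connected} base $V$, between two constant (i.e.\ pulled-back-from-$\cc{S}$) families $\gamma^*R_i \times V$ and $\gamma^*R_j \times V$, must itself be constant: it is induced by a unique bijection $R_i \mr{s_\ell} R_j$. This is where connectedness of $V$ is used in an essential way — for a general $V$ the isomorphism need not descend to a bijection of the index sets — and I expect this to be the main (though still routine) point of the argument. Concretely, $\gamma$ being connected means $\gamma^*$ is fully faithful on the relevant slice, or more simply: $\mathrm{Hom}_{\cc{E}/V}(\gamma^*R_i \times V, \gamma^*R_j \times V) \cong \mathrm{Hom}_{\cc{S}}(R_i, R_j)$ when $V$ is connected, because $\gamma_!(V) = 1$. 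Hence the diagram in Definition~\ref{actionspan} commutes for this unique $s_\ell$, and $\ell$ is an action span.

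Granting this, the proposition is immediate. Since $\cc{E}$ is locally connected, it has a site of definition consisting of connected objects, equivalently the connected objects form a generating family. Therefore, for each $(i,j) \in N_1$, the family of all maps $V \mr{(u,v)} U_i \times U_j$ with $V$ connected is already an epimorphic family onto $U_i \times U_j$ (cover $U_i \times U_j$ by connected objects and map each into $U_i$ and $U_j$ by the two projections). By the claim just proved, every such $(V,u,v)$ is an action span, so \emph{a fortiori} the family of \emph{all} action spans $V \mr{(u,v)} U_i \times U_j$ is epimorphic. By Definition~\ref{coveringprojectionobject} — and using Remark~\ref{sieve} to restrict $V$ to a site of definition, which here we are taking to be the connected objects — $X$ is a covering projection. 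Since $X$ was an arbitrary locally constant object, we are done.

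The only real content, then, is the lemma "connected vertex $\Rightarrow$ action span," and within that the only delicate step is the descent of a base-indexed isomorphism of constant families to a bijection of fibres over a connected base; everything else (pulling back trivializations, chasing the $\theta$–$\sigma$ compatibility square, invoking local connectedness to get a connected generating family) is bookkeeping. I would present the lemma first with that one-line justification about $\gamma_!(V)=1$, then close with the two-sentence reduction above.
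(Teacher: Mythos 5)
Your proposal is correct and is exactly the paper's argument: the paper's entire ``proof'' is the one-line observation immediately preceding the proposition (``Every span with connected vertex is an action span''), combined implicitly with the fact that connected objects generate a locally connected topos. You have simply supplied the details the paper omits --- in particular the key point that an isomorphism $\gamma^*R_i\times V\cong\gamma^*R_j\times V$ over a connected $V$ descends to a bijection $R_i\to R_j$ because $\gamma_!(V)=1$ --- so there is nothing to criticize.
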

\begin{proposition} \label{facts}
Let $X \sim (S \to I, \, \sigma)$ be a locally constant object
trivialized by a cover $U \to \gamma^* I$, let $\cc{C}_{sp}$ be the set
of all action spans with $V$ in a site of 
definition, and $\cc{C}$ be the set of vertices of the spans in
$\cc{C}_{sp}$. Then: 
  
(1) The conditions in construction \ref{1spanref} are satisfied.

(2) The map $H_2 \mr{} (cosk_1 H_\bullet)_2$ is an epimorphism.

(3) If $X$ is a covering projection, then the map 
$H_1 \mr{} (cosk_0 H_\bullet)_1$ is an epimorphism. 
Thus $H_\bullet \mr{h_\bullet} U_\bullet$ is an hypercovering. 
\end{proposition}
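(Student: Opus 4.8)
The plan is to unwind the definitions of the relevant constructions — action span (Definition \ref{actionspan}), the set $\cc{C}_{sp}$ of all action spans, the $1$-span refinement of Construction \ref{1spanref}, and the coskeleton conditions spelled out in \ref{thecoeskeleton} and Definition \ref{indexedhypercover} — and check each of the three claims in turn against these.

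For (1) I would verify directly that $\cc{C}_{sp}$, the set of \emph{all} action spans with vertex in a fixed site, satisfies the three closure hypotheses of Construction \ref{1hypercover}: it contains the identity spans $(U_i \ml{id} U_i \mr{id} U_i)$ because these are plainly action spans with $s_{\sigma_0(i)} = id_{R_i}$ (the identity and cocycle conditions on $\sigma$ force this); it is closed under isomorphism of spans because the defining diagram of an action span can be transported along any span isomorphism (this is exactly Remark \ref{sieve} in the special case of an isomorphism); and it is closed under the "projected" spans $U_i \ml{u} V \mr{u} U_i$ and $U_j \ml{v} V \mr{v} U_j$ — for the first one $s$ is the identity and the commuting square follows from the cocycle/identity equation applied to a degenerate $2$-simplex, for the second one $s = s_\ell$. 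Finally closure under the dual span $U_j \ml{v} V \mr{u} U_i$ holds with $s_{\ell^{op}} = s_\ell^{-1}$, which is forced by the cocycle condition on the degenerate $2$-simplex $w$ realizing $\ell \cdot \ell^{op} \sim \sigma_0$, exactly as in the Cech picture; $\cc{C}$ is then by definition the set of vertices, so it automatically contains all $U_i$.

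For (2) and (3) I would use the criterion of Proposition \ref{hypercovering}(\ref{1indexedhypercover}): $H_\bullet \to U_\bullet$ is an hypercover once the families $\{W \mr{(u,v)} P_{ij}\}_{(u,v)\in\cc{C}_{sp}}$ and $\{W \mr{} P_{\ell t r}\}_{W\in\cc{C}}$ are epimorphic. Claim (3) is precisely that the first family is epimorphic (this is the map $H_1 \to (cosk_0 H_\bullet)_1$, with components $(H_1)_\ell \to P_{ij} = U_i\times U_j$), and that is exactly the definition of $X$ being a covering projection, Definition \ref{coveringprojectionobject} — so (3) is essentially immediate. Claim (2) is that the second family is epimorphic, i.e. $H_2 \to (cosk_1 H_\bullet)_2$ is epi; this says that for each $(\ell, t, r)\in T_2$ the cone object $P_{\ell t r}$ is covered by action-span vertices mapping into it via $2$-simplices over $(\ell,t,r)$. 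Here the point is that any nonempty map $V' \to P_{\ell t r}$ produces, by composition with the four legs, spans $V'$ over $U_i, U_j, U_k$ refining $\ell, t, r$ — and since $\ell, t, r$ are themselves action spans, Remark \ref{sieve} shows these refinements are again action spans; pulling them together into a $2$-span gives an element of $(S_2)_{\ell t r}$ with vertex mapping onto (a piece of) $P_{\ell t r}$, and ranging over a generating set of such $V'$ makes the family epimorphic. The conclusion "$H_\bullet \to U_\bullet$ is an hypercover" then follows by combining (2), (3), and Proposition \ref{hypercovering}(\ref{1indexedhypercover}).

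The main obstacle I anticipate is (2): one must be careful that the $2$-span one builds over $P_{\ell t r}$ genuinely has all four of its span faces in $\cc{C}_{sp}$ (so that it is a legitimate $2$-simplex of the $1$-span refinement), which requires that $\cc{C}_{sp}$ being the set of \emph{all} action spans is really what makes this work — a smaller choice of $\cc{C}_{sp}$ would not suffice. The bookkeeping of matching the three bijections $s_\ell, s_t, s_r$ compatibly (the analogue of the cocycle condition) is where the argument has real content; everything else is a matter of transcribing the universal property of $cosk_1$ and invoking Remark \ref{sieve} and Definition \ref{coveringprojectionobject}.
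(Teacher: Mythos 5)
Your proposal is correct and follows essentially the same route as the paper's proof: (1) via the closure of action spans under duals (with $s_{\ell^{op}}=s_\ell^{-1}$), isomorphisms and the identity condition; (2) via Remark \ref{sieve}, showing any nonempty $V\to P_{\ell t r}$ makes $V$ the vertex of (three) action spans, hence a legitimate top vertex of a $2$-simplex in $(S_2)_{\ell t r}$; and (3) directly from Definition \ref{coveringprojectionobject} together with the criterion of Proposition \ref{hypercovering}. The one obstacle you anticipate in (2) --- compatibility of the bijections $s_\ell, s_t, s_r$ --- does not actually arise, since Construction \ref{1spanref} admits a $2$-span into $S_2$ as soon as its three $1$-span faces lie in $\cc{C}_{sp}$ and its vertex lies in $\cc{C}$, with no cocycle condition imposed at that stage.
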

\begin{proof}
(1) Observe that the dual span of an action span is an action span
  with inverse bijection $s_\ell^{-1}$. The other requirements follow
  from the descent identity condition (1) in definition \ref{descentf}
  and remark \ref{sieve}. Recall
  that in this case $(s_0)_i$ is the diagonal of $U_i$.

We refer now to proposition \ref {hypercovering}, \ref{1indexedhypercover}:  

(2) From remark \ref{sieve} 
  it follows that for any map $\emptyset \neq 
V \mr{} P_{\ell t r}$, $V$ is the vertex of a (in fact three) action spans, 
thus it is in $\cc{C}$.
 
(3) It holds by definition of covering projection.
\end{proof}
\begin{remark}
From remark \ref{sieve} it follows that the simplicial
family \mbox{$H_\bullet \mr{} \gamma^*(S_\bullet)$} is a \emph{sieve} in the
sense that given any $V \in \cc{C}$, $V \subset (H_1)_\ell$, there
exists $t \in S_1$ such that $(H_1)_t = V$.  
\end{remark}
Finally, from remark \ref{sieve} and propositions \ref{constructionG},
\ref{hyper=all} and \ref{cparesdatum} we have:
\begin{theorem} \label{main1}
Let $X \sim (R \to I, \, \sigma)$ be a covering projection trivialized
by a cover $U \to \gamma^* I$. Then there exist a self-dual simplicial
hypercover 
refinement of the Cech simplicial family (see proposition \ref{counit})
$$
\xymatrix
        {
          H_\bullet \ar[r]^{h_\bullet} \ar[d]^{\zeta_\bullet} 
        & U_\bullet \ar[d]^{\zeta_\bullet}
        \\
          \gamma^{*} S_\bullet \ar[r]^{\gamma^*\alpha_\bullet} 
        & \gamma^{*} N_\bullet
        }
$$ 
satisfying condition G (definition \ref{conditionG}), and a consistent
(cf \ref{consistent}) 
\mbox{$S_\bullet$-descent} datum $\{s_\ell \}_{\ell \in S_1}$ on the 
family $R \to I$ such that the corresponding \mbox{$H_\bullet$-descent
datum} $\sigma_\ell$ (proposition \ref{hyper=all}) is of the form
\mbox{$\sigma_\ell = \gamma^*(s_\ell)\times (\tau_1)_\ell$} (remark
\ref{hyper=allremark}). Vice-versa, any such descent datum on a
self-dual simplicial hypercover 
refinement of the Cech simplicial family determines a covering
projection trivialized by the cover $U \to \gamma^* I$.
\cqd\end{theorem}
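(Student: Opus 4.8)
The plan is to deduce the statement by assembling the constructions and comparison results of the previous sections, treating the two implications separately.

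For the forward implication, start from a covering projection $X \sim (R \to I,\, \sigma)$ trivialized by the cover $U \to \gamma^{*}I$. Let $\cc{C}_{sp}$ be the set of all action spans for $X$ whose vertex lies in a fixed site of definition, and $\cc{C}$ the set of their vertices, as in Proposition~\ref{facts}. By Proposition~\ref{facts}(1) these satisfy the hypotheses of Construction~\ref{1spanref}, which therefore yields a self-dual simplicial refinement $H_\bullet \to S_\bullet$ of the Cech simplicial family whose $1$-simplices are exactly the action spans of $X$ with vertex in the chosen site. By Proposition~\ref{facts}(2)--(3), and here using that $X$ is a covering projection, the canonical morphism $H_\bullet \to U_\bullet$ is a hypercover; by Proposition~\ref{constructionG} it satisfies condition~G. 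Since every $\ell \in S_1$ is an action span, Definition~\ref{actionspan} attaches to it the unique bijection $s_\ell : R_i \to R_j$, and I take $\{s_\ell\}_{\ell \in S_1}$ as the candidate $S_\bullet$-descent datum.

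It then remains to verify the three required properties of $\{s_\ell\}$. The identity condition $s_{\sigma_0(i)} = id_{R_i}$ follows from the identity condition (1) of Definition~\ref{descentf} applied to $\sigma$, because $\sigma_0(i)$ is the span with identity legs and $(s_0)_i$ is the diagonal of $U_i$. The cocycle condition $s_{\partial_1(w)} = s_{\partial_0(w)} \circ s_{\partial_2(w)}$ for $w \in S_2$ is obtained by pasting the three action-span squares of Definition~\ref{actionspan} associated to $\partial_2(w),\, \partial_1(w),\, \partial_0(w)$ onto the cocycle square (2) of Definition~\ref{descentf} for $\sigma$, using the maps $(H_2)_w \to (H_1)_{\partial_i(w)}$ and $(H_2)_w \to U_i,\,U_j,\,U_k$ of Remark~\ref{span}; since $(H_2)_w \neq \emptyset$, uniqueness of the bijections forces the equality. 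Consistency is immediate from Remark~\ref{sieve}: a morphism of non-empty spans as in condition~(2) of Proposition~\ref{groupoid} identifies $s_\ell$ with $s_t$. Finally, unwinding the correspondence of Proposition~\ref{hyper=all}, the action-span square for $\ell$ is precisely the assertion that $\gamma^{*}(s_\ell) \times (\tau_1)_\ell$ is the $H_\bullet$-descent datum obtained from $\sigma$ by restriction along $(h_1)_\ell$, so the datum corresponding to $\{s_\ell\}$ has the claimed form $\sigma_\ell = \gamma^{*}(s_\ell) \times (\tau_1)_\ell$ (Remark~\ref{hyper=allremark}).

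For the converse, let $H_\bullet \to S_\bullet$ be a self-dual simplicial hypercover refinement of the Cech simplicial family satisfying condition~G, and let $\{s_\ell\}$ be a consistent $S_\bullet$-descent datum on $R \to I$. By Remark~\ref{hyper=allremark} this induces the $H_\bullet$-descent datum $\sigma_\ell = \gamma^{*}(s_\ell) \times (\tau_1)_\ell$ on $\gamma^{*}(R) \times_{\gamma^{*}(I)} U \to U$; since $H_\bullet \to U_\bullet$ is a hypercover, Proposition~\ref{hyper=all} carries it to a $U_\bullet$-descent datum $\sigma_{j,i}$ on the same object, and ordinary (Grothendieck) descent along $U \to \gamma^{*}I$ then produces the locally constant object $X$ together with its trivialization structure. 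To see $X$ is a covering projection, note that for each $(i,j) \in N_1$ every $\ell \in (S_1)_{ij}$ is an action span for $X$ with bijection $s_\ell$ — this is exactly the shape of $\sigma_\ell$ read through Proposition~\ref{hyper=all} — so the family $\{(H_1)_\ell \to U_i \times U_j\}_{\ell \in (S_1)_{ij}}$ consists of action spans and is epimorphic because $H_\bullet \to U_\bullet$ is a hypercover (Definition~\ref{indexedhypercover}); thus the defining condition of Definition~\ref{coveringprojectionobject} holds. I expect the cocycle verification in the forward direction to be the principal obstacle: it requires carefully matching the three action-span squares against the $2$-simplex object $(H_2)_w$, its faces and its composite projections, in the correct order, and then invoking uniqueness of the $s_\ell$ to conclude; everything else is a direct invocation of the cited propositions and remarks.
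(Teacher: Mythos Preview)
Your proposal is correct and follows essentially the same approach as the paper, which simply cites Remark~\ref{sieve} and Propositions~\ref{constructionG}, \ref{hyper=all} (together with Proposition~\ref{facts}, stated immediately before) without spelling out any details. You have in fact filled in considerably more than the paper does, particularly the explicit verification of the identity, cocycle, and consistency conditions for $\{s_\ell\}$, and the check in the converse direction that each $(H_1)_\ell$ is an action span so that the hypercover condition on $H_1 \to (cosk_0 H_\bullet)_1$ delivers the epimorphic family required by Definition~\ref{coveringprojectionobject}.
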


\emph{We will also say that the covering projection is trivialized by the
hypercover $H_\bullet \mr{} \gamma^*(S_\bullet)$}
With the notation in the theorem above, from proposition
\ref{descent=actionG} we have: 

\begin{theorem} \label{main2}
Given a cover $U \to \gamma^* I$ and a self-dual simplicial hypercover
refinement of the Cech simplicial family 
satisfying condition G, then the category of covering projections
trivialized by a consistent 
\mbox{$S_\bullet$-descent} datum $\{s_\ell \}_{\ell \in S_1}$ on a
family $R \to I$, is isomorphic to the category (topos) of  left actions of the
G-fundamental groupoid of the family (Proposition \ref{groupoid}).
\cqd\end{theorem}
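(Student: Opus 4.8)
Theorem \ref{main2} asserts an isomorphism between two categories: on one side, the category of covering projections trivialized by a consistent $S_\bullet$-descent datum on a family $R \to I$; on the other, the topos of left actions of the $G$-fundamental groupoid $\nn{G}_\cc{H}$. The natural strategy is to factor the desired isomorphism as a composite of two isomorphisms of categories that the paper has already built: the first, supplied by Theorem \ref{main1} together with Proposition \ref{hyper=all}, identifies covering projections (trivialized by the given hypercover, via a consistent descent datum) with consistent $S_\bullet$-descent data on $R \to I$; the second, supplied by Proposition \ref{descent=actionG}, identifies consistent $S_\bullet$-descent data with left actions of the $G$-fundamental groupoid of the family. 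Composing, one lands exactly on the stated isomorphism of categories (and of topoi).

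\smallskip

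\textbf{First I would make the identification of covering projections with consistent descent data precise.} Fix the cover $U \to \gamma^* I$ and the self-dual simplicial hypercover refinement $H_\bullet \to \gamma^*(S_\bullet)$ satisfying condition G. By Proposition \ref{hyper=all}, composing with the maps $(h_1)_\ell$ sets up an isomorphism between the category of $U_\bullet$-descent data and the category of $H_\bullet$-descent data on objects of the form $\gamma^*(R)\times_{\gamma^*(I)}U \to U$. Under this isomorphism, by Theorem \ref{main1}, the covering projections correspond precisely to those $H_\bullet$-descent data of the special form $\sigma_\ell = \gamma^*(s_\ell)\times(\tau_1)_\ell$ arising (via Remark \ref{hyper=allremark}) from a consistent $S_\bullet$-descent datum $\{s_\ell\}_{\ell\in S_1}$ on $R\to I$; and the assignment $\{s_\ell\}\mapsto \sigma_\ell$ is a bijection on objects that respects morphisms, hence an isomorphism of categories. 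So the first factor is: covering projections trivialized by a consistent $S_\bullet$-descent datum $\cong$ the category of consistent $S_\bullet$-descent data on $R\to I$.

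\smallskip

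\textbf{Next I would invoke Proposition \ref{descent=actionG} verbatim.} Since $H_\bullet \to \gamma^*(S_\bullet)$ is self-dual and satisfies condition G, that proposition gives a one-to-one correspondence between consistent $S_\bullet$-descent data and left actions of the $G$-fundamental groupoid of the family, extending to an isomorphism of categories (and of topoi) with the obvious morphisms. Splicing this isomorphism onto the one from the previous step yields an isomorphism between the category of covering projections trivialized by a consistent $S_\bullet$-descent datum on $R\to I$ and the category of left actions of $\nn{G}_\cc{H}$, which is the content of the theorem. Finally one must check that the "obvious" notions of morphism match up across all three categories — i.e. that a morphism of covering projections is the same as a morphism of the associated $H_\bullet$-descent data (built into Proposition \ref{hyper=all}) which is the same as a morphism of the underlying $S_\bullet$-descent data (built into Theorem \ref{main1}/Remark \ref{hyper=allremark}) which is the same as a $G$-equivariant map (Proposition \ref{descent=actionG}); this is routine once the object-level bijections are in hand.

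\smallskip

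\textbf{The main obstacle I anticipate is bookkeeping rather than a genuine difficulty}: one must verify that the descent datum produced from a covering projection is genuinely \emph{consistent} in the sense of \ref{consistent} — that is, that whenever two $1$-simplexes $\ell, t$ are related as in clause (2) of Proposition \ref{groupoid}, one has $s_\ell = s_t$ — and dually that every consistent $S_\bullet$-descent datum comes back from a covering projection. The forward direction uses Remark \ref{sieve} (the bijection attached to an action span depends only on the span up to refinement, so two spans dominated by a common refinement induce the same bijection); the reverse direction uses that an $S_\bullet$-descent datum on $R\to I$ manifestly makes every non-empty span over the $U_i$'s an action span, so the epimorphic-family condition of Definition \ref{coveringprojectionobject} is automatic. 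Once these two checks are recorded, the theorem follows formally by composing the two isomorphisms of categories (topoi) described above.
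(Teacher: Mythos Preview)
Your proposal is correct and follows essentially the same route as the paper: the paper presents Theorem \ref{main2} as an immediate consequence of Proposition \ref{descent=actionG} (the sentence preceding the theorem reads ``from proposition \ref{descent=actionG} we have''), implicitly relying on Theorem \ref{main1} for the identification of the category of covering projections trivialized by the hypercover with the category of consistent $S_\bullet$-descent data. Your write-up simply spells out this identification and the morphism-level checks in more detail than the paper does; the consistency verification you flag as an ``obstacle'' is already absorbed into Theorem \ref{main1} and Remark \ref{sieve}, so nothing new is needed here.
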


In the case of a locally connected topos, taking into account
construction \ref{0spanref}, example 
\ref{connectedhypercover} and proposition \ref{cparesdatum} it
follows: 

\begin{theorem}
Given any locally connected topos $\cc{E}$, the statement in theorem
\ref{main1} holds for any locally constant object $X \sim (R \to I, \,
\sigma)$ trivialized 
by a cover $U \to \gamma^* I$. Thus X can be constructed by a  
\mbox{$S_\bullet$-descent} datum $\{s_\ell \}_{\ell \in S_1}$ on the 
family $R \to I$, where $S_\bullet$ is the simplicial set constructed
in \ref{0spanref} with any set of connected generators.   
\cqd\end{theorem}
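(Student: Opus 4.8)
The plan is to note first that, by Proposition~\ref{cparesdatum}, $X$ is a covering projection, so Theorem~\ref{main1} applies to $X$ verbatim and already gives its first assertion; what needs a separate argument is only that the \emph{particular} $0$-span refinement of Construction~\ref{0spanref} built from connected generators may be used as the hypercover $H_\bullet\mr{}\gamma^*(S_\bullet)$ in that theorem.

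First I would reduce to the case in which every component $U_i$ of the cover is connected. Since $\cc{E}$ is locally connected it admits a site of connected objects, so each $U_i$ is covered by connected objects, and composing yields a refining cover $U'\mr{}\gamma^*(I')$ with connected components along which the $U_\bullet$-descent datum pulls back; this replaces $R\to I$ by its pullback $R'\to I'$ and is harmless for the statement. Assuming the $U_i$ connected, fix any set $\cc{C}$ of connected generators of $\cc{E}$, closed under isomorphism and containing all the $U_i$ (enlarging a generating family and closing it under isomorphism are harmless), and form the $0$-span refinement $H_\bullet\mr{h_\bullet}U_\bullet$, $H_\bullet\mr{\xi_\bullet}\gamma^*(S_\bullet)$ of Construction~\ref{0spanref}. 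By Proposition~\ref{constructionG} it satisfies condition~G, and by Example~\ref{connectedhypercover} --- equivalently, by Proposition~\ref{hypercovering}\,(\ref{0indexedhypercover}), using that for each $(i,j)\in N_1$ and $(\ell,t,r)\in T_2$ the families of all maps $\{W\mr{}P_{ij}\}_{W\in\cc{C}}$ and $\{W\mr{}P_{\ell t r}\}_{W\in\cc{C}}$ are epimorphic precisely because $\cc{C}$ generates --- it is a hypercover refinement of the Cech simplicial family.

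Next I would construct the consistent $S_\bullet$-descent datum. By the construction of $S_1$ in Construction~\ref{0spanref} every $\ell\in S_1$ is a span $U_i\ml{u}V\mr{v}U_j$ with connected vertex $V\in\cc{C}$, hence an action span (every span with connected vertex is an action span), and Definition~\ref{actionspan} provides the unique bijection $s_\ell\colon R_i\to R_j$. That $\{s_\ell\}_{\ell\in S_1}$ is an $S_\bullet$-descent datum (Definition~\ref{descent}) reduces to $s_{\sigma_0(i)}=id$ and $s_{\partial_1(w)}=s_{\partial_0(w)}\circ s_{\partial_2(w)}$: the first is the identity condition~(1) on the $U_\bullet$-descent datum $\sigma$ (Definition~\ref{descentf}) evaluated on the diagonal span $\sigma_0(i)$, the second follows by pulling the cocycle condition~(2) on $\sigma$ back along the composite legs $(H_2)_w=W\mr{}U_i\times U_j\times U_k$ of an arbitrary $2$-span $w$ and using $W\neq\emptyset$ together with the uniqueness of $s_{\partial_0(w)},\,s_{\partial_1(w)},\,s_{\partial_2(w)}$. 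Consistency in the sense of~\ref{consistent} follows from Remark~\ref{sieve}: a morphism of non-empty spans as in condition~(2) of Proposition~\ref{groupoid} forces $s_\ell=s_t$. Finally, by Remark~\ref{hyper=allremark} this datum induces the $H_\bullet$-descent datum $\sigma_\ell=\gamma^*(s_\ell)\times(\tau_1)_\ell$, and by the very definition of action span $\sigma_\ell$ is the image of the original $\sigma_{j,i}$ under the bijection of Proposition~\ref{hyper=all}; hence descent along $H_\bullet$ recovers $X$, which is the content of Theorem~\ref{main1} for this $S_\bullet$.

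The step I expect to be the main obstacle is the cocycle identity for $\{s_\ell\}$: one must transport the cocycle equation of $\sigma$ --- an identity among isomorphisms over $U_i\times U_j$, $U_j\times U_k$ and $U_i\times U_k$ --- down to the three legs of an arbitrary $2$-span and conclude an equality of bijections of \emph{sets}. This goes through because $W\neq\emptyset$, so the three pulled-back action-span squares determine the bijections uniquely, but the diagram chase is the one genuine verification; everything else is a citation of Propositions~\ref{cparesdatum}, \ref{constructionG}, \ref{hypercovering} and~\ref{hyper=all}, Example~\ref{connectedhypercover}, and Remark~\ref{sieve}.
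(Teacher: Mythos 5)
Your proof is correct and follows essentially the same route as the paper, whose own ``proof'' is just the one-line citation of Proposition \ref{cparesdatum} (every locally constant object in a locally connected topos is a covering projection), Construction \ref{0spanref} and Example \ref{connectedhypercover}, with all verifications left to the reader. The only genuine addition you make is the preliminary refinement to a cover with connected components (needed so that the $U_i$ can lie in a set of connected generators, a hypothesis of Example \ref{connectedhypercover} that the theorem's statement leaves implicit); the rest of your argument --- the action-span construction of $s_\ell$, the identity and cocycle checks via $W \neq \emptyset$, and consistency via Remark \ref{sieve} --- is a careful unwinding of the same chain of citations underlying Theorem \ref{main1}.
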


\section{Fundamental progroupoid of a topos} 
This section is rather sketchy and we refer the reader to \cite{D3} for details and complete proofs.  
%
%
Given a cover family $\cc{U} = U \mr{} \gamma^*S$
and a  family hypercover refinement
\mbox{$\cc{H} = H_\bullet \mr{} \gamma^*(S_\bullet)$,} 
there is clear definition of morphisms of covering projections constant on $\cc{U}$ (constant on $\cc{H}$), \cite[1.4]{D3}. This determines categories 
$\cc{G}_\cc{U}$ ($\cc{G}_\cc{H}$), furnished with a faithful (but not full) functor
 $\cc{G}_\cc{U} \mr{} \cc{E}$ ($\cc{G}_\cc{H} \mr{} \cc{E}$). With this it is easy to construct
the colimit (inside $\cc{E}$) of the categories $\cc{G}_\cc{U}$  
($\cc{G}_\cc{H}$) indexed by $\cc{U}$ (indexed by $\cc{H}$), \cite[1.4]{D3}. It follows from \mbox{theorem \ref{main1}} that every covering projection is in some $\cc{G}_\cc{H}$, so these two colimits are equal. We  denote this category $c\cc{G}(\cc{E})$, it is the category of all covering projections. We have  
$\cc{G}_\cc{H} \to c\cc{G}(\cc{E}) \to \cc{E}$.
It follows from theorem 
\ref{main2} that the category
$\cc{G}_\cc{H}$  is the
classifying topos of a groupoid $\nn{G}_\cc{H}$ (the
G-fundamental groupoid of the simplicial family)  $\cc{G}_\cc{H} =
 \beta\nn{G}_\cc{H}$. This determines a protopos 
 $\cc{G}(\cc{E}) = \{\cc{G}_\cc{H}\}_\cc{H}$ and a progroupoid
$\pi_1(\cc{E}) = \{\nn{G}_\cc{H}\}_\cc{H}$, and we have 
$\beta \pi_1(\cc{E}) = \cc{G}(\cc{E})$. The inverse limit topos of this protopos is the topos of sheaves for a subcanonical Grothendieck topology on the category $c\cc{G}(\cc{E})$ \cite[4.1, 4.4]{D3}. 

 Given a group $K$, recall that a
\emph{K-torsor} in a 
topos $\cc{E}$ is an \mbox{object} $T \in \cc{E}$, $T \to 1$ epi, together with
an action $\gamma^*K 
\times T \mr{} T$ such that the arrow $\gamma^*K
\times T \mr{\varepsilon} T \times T$ defined by
 $\varepsilon(x, \, u) = (x \pa u, \, u)$ is an isomorphism.
 Clearly any
torsor $T$ determines in a canonical way a locally constant object
 $T = (T,\,K,\, \varepsilon)$ split by the (singleton family) cover $T \to 1$, which in fact is a covering projection.
  Following exactly the same lines that in \cite[Section 6]{D3}, it can be proved that $\pi_1(\cc{E})$ 
classifies torsors. 
If we
denote $pro\cc{G}rpd$, the 2-category of progroupoids, we have: 

\vspace{1ex}

\emph{There is an equivalence of categories
 $pro\cc{G}rpd[\pi_1(\cc{E}),\, K] 
\cong K \textrm{-}\cc{T}ors(\cc{E})$.}

\vspace{1ex}
 
Note that this
furnish an explicit construction of the fundamental progroupoid
$\pi(\cc{E})$, to be compared in the locally connected case with the construction in \cite[Section 10]{AM}.

\end{document}